\newtheorem{thm}{Theorem}[section]
\newtheorem*{thmi}{Theorem}
\newtheorem{prop}[thm]{Proposition}
\newtheorem{LM}[thm]{Lemma}
\newtheorem*{LMi}{Lemma}
\newtheorem{cor}[thm]{Corollary}
 \theoremstyle{definition}
  \newtheorem{definition}[thm]{Definition}
    \newtheorem{conj}{Conjecture}
 \newtheorem{question}[thm]{Question}
  \newtheorem{examples}{Examples}[section]
    \newtheorem{rem}[thm]{Remark}
  \DeclareMathOperator {\Hom}{Hom}
   \DeclareMathOperator {\rank}{rank}
   \DeclareMathOperator {\FF}{\mathfrak{F}}
      \DeclareMathOperator {\Ext}{Ext} 
 \DeclareMathOperator {\HNN}{HNN}
 \DeclareMathOperator {\res}{res} 
 \DeclareMathOperator {\indu}{ind}
  \DeclareMathOperator {\F}{\mathfrak{F}} 
  \DeclareMathOperator {\mor}{mor} 
    \DeclareMathOperator {\Gcd}{Gcd} 
      \DeclareMathOperator {\HF}{{\scriptstyle \mathbf H}\mathfrak F}
    \DeclareMathOperator {\coh}{H} 
 \DeclareMathOperator {\cd}{cd} 
 \DeclareMathOperator {\FA}{FA} 
 \DeclareMathOperator {\Out}{Out} 
   \DeclareMathOperator {\gd}{gd}
  \DeclareMathOperator {\UU}{\mathfrak{U}}  
  \DeclareMathOperator {\BB}{\mathfrak{B}}  
    \DeclareMathOperator {\X}{\mathfrak{X}}  
    \DeclareMathOperator {\Y}{\mathfrak{Y}}
    \DeclareMathOperator {\cat}{CAT(0)} 
  \DeclareMathOperator {\pd}{pd} 
   \DeclareMathOperator {\Z}{\mathbb{Z}} 
    \DeclareMathOperator {\spli}{spli}
        \DeclareMathOperator {\silp}{silp}
 \DeclareMathOperator {\D}{\Delta} 
 \DeclareMathOperator {\FP}{FP}
    \DeclareMathOperator {\CW}{CW}
 \DeclareMathOperator {\toi}{ \hookrightarrow}    
 \DeclareMathOperator {\tos}{\twoheadrightarrow} 
 \DeclareMathOperator{\eg}{\it E_{\FF}{\it G}}
\numberwithin{equation}{section}
\begin{document}

\title[Unbounded torsion and  a conjecture of Kropholler and Mislin]{Some $ {\scriptstyle \mathbf H}_{1}\mathfrak{F}$-groups with unbounded torsion and \\ a conjecture of Kropholler and Mislin}

\author{Giovanni Gandini}
\author{Brita E.A. Nucinkis}

\curraddr{Rheinische Wilhelms-Universit\"{a}t Bonn, Mathematisches Institut, Endenicher Allee 60, 53115 Bonn, GERMANY}
\email{giovanni.gandini@hausdorff-center.uni-bonn.de}
\curraddr{School of Mathematics, University of Southampton, Southampton, SO17 1BJ UNITED KINGDOM}
\email{B.E.A.Nucinkis@soton.ac.uk}

\thanks{The first author was supported by the Leibniz-award of Wolfgang L\"uck and the EPSRC grant EP/J016993/1.}

\subjclass[2010]{Primary 20F65, 	18G60}

\date{\today}

\keywords{Classifying spaces for proper actions, cohomological invariants}

\begin{abstract} Kropholler and Mislin conjectured that groups acting admissibly on a finite-dimensional $G$-CW-complex with finite stabilisers admit a finite-dimensional model for $\eg,$ the classifying space for proper actions. This conjecture is known to hold for groups with bounded torsion. In this note we consider a large class of groups $\UU$ containing the above and many known examples with unbounded torsion. We show that the conjecture holds for a large subclass of $\UU.$
\end{abstract}

\maketitle

\

\section{Introduction}

Let $G$ be a group and denote by  $\FF$  the class of finite groups. A  $G$-$\CW$-complex  $X$ is  a \emph{classifying space for proper actions of $G$}, or a model for $\eg$, if  the fixed point subcomplex $X^{K}$ is contractible for every finite subgroup $K$ of $G$, and empty otherwise.  
Examples for $\eg$ abound, see for example L\"uck's survey article \cite{luck-05}. 

Generalisations of the constructions of Milnor \cite{milnor} and Segal  \cite{segal} yield a model for $E_{\FF} G$ for every group $G$. However, these constructions lead to infinite models.  The minimal dimension of a model for $\eg$, denoted $\gd_{\FF}G$ is called \emph{the Bredon geometric dimension of $G$}. 
The most natural algebraic counterpart is Bredon cohomology. In Bredon cohomology there is a well-defined notion of  cohomological dimension. For a group $G$, \emph{the Bredon cohomological dimension}, $\cd_{\FF}G$ plays a role analogous to that of  the integral cohomological dimension $\cd G$ in ordinary group cohomology. In particular, $\cd_{\FF} G $ is finite if and only if $\gd_{\FF} G$ is finite \cite{luck-89}. Since both dimensions are often very difficult to compute, various authors  have proposed alternative geometric and algebraic invariants to guarantee the finiteness of $\cd_{\F}G$, and partial results have been proved, see for example \cite{guidosbook, abdeta, nucinkis-00}.

The first of these invariants are $\silp G$ and $\spli G$,  the supremum of the injective lengths of the projective modules and the supremum of the projective lengths of injective modules respectively,  introduced by Gedrich and Gruenberg \cite{GG} in a different context.

In 1993  Kropholler introduced the class  ${\scriptstyle \mathbf H}\mathfrak F$   of \emph{hierarchically decomposable groups} \cite{MR1246274}. The class ${\scriptstyle \mathbf H}\mathfrak F$ is defined as the smallest class of groups containing the class $\FF$ and which contains a group $G$ whenever there is an admissible action of $G$ on a finite-dimensional contractible cell complex for which all isotropy groups already belong to ${\scriptstyle \mathbf H}\mathfrak F$.  The class ${\scriptstyle \mathbf H}\mathfrak F$ can also be defined transfinitely.
Classes of groups with a hierarchical decomposition defined in terms of suitable actions on finite-dimensional complexes appeared previously in the literature, see for examples \cite{alpesha, ikenagafa}. The class of main importance for this note  is the class ${\scriptstyle \mathbf H}_{1}\mathfrak{F}$  \cite{MR1246274}, the first step in the hierarchy.  A group belongs to  ${\scriptstyle \mathbf H}_{1}\mathfrak{F}$   if there is a finite-dimensional contractible $G$-$\CW$-complex $X$ with cell stabilisers in $\mathfrak{F}$.  Clearly every group with finite Bredon geometric dimension lies in ${\scriptstyle \mathbf H}_{1}\mathfrak{F}$, yet it is still unknown whether the converse holds: 

\begin{conj}[Kropholler-Mislin, \cite{guidosbook, MR1851258}] \label{K-M} Every  ${\scriptstyle \mathbf H}_{1}\mathfrak{F}$-group  $G$ admits a finite-dimensional model for $\eg$. 
\end{conj}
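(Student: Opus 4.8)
The plan is to reduce, using L\"uck's equivalence $\cd_{\FF}G<\infty\Leftrightarrow\gd_{\FF}G<\infty$, to proving that an arbitrary ${\scriptstyle \mathbf H}_1\mathfrak{F}$-group $G$ has finite Bredon cohomological dimension, and then to obtain this from the defining action combined with an analysis of the finite subgroups. Fix a finite-dimensional contractible $G$-$\CW$-complex $X$ with finite cell stabilisers and set $n=\dim X$. Its augmented cellular chain complex
\[
0\longrightarrow C_n\longrightarrow\cdots\longrightarrow C_0\longrightarrow\mathbb{Z}\longrightarrow 0,\qquad C_i=\bigoplus_{\sigma\in\Sigma_i}\mathbb{Z}[G/G_\sigma],
\]
is exact, with every stabiliser $G_\sigma$ finite, and this already yields something at the level of ordinary modules: given an injective $\mathbb{Z}G$-module $I$, tensoring over $\mathbb{Z}$ and using $\mathbb{Z}[G/H]\otimes_{\mathbb{Z}}I\cong\operatorname{Ind}_H^G\operatorname{Res}_H I$ produces a length-$n$ resolution of $I$ by the modules $\operatorname{Ind}_{G_\sigma}^G\operatorname{Res}_{G_\sigma} I$; since $\mathbb{Z}G$ is free over $\mathbb{Z}G_\sigma$ each $\operatorname{Res}_{G_\sigma} I$ is injective over the finite group ring $\mathbb{Z}G_\sigma$, hence of projective dimension at most one (one has $\spli\mathbb{Z}Q=1$ for every finite group $Q$), and as induction preserves projectives we get $\spli\mathbb{Z}G\le n+1<\infty$.

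The catch is that $\spli\mathbb{Z}G<\infty$ is not by itself enough: $\cd_{\FF}G$ is a Bredon invariant that records the finite subgroups of $G$, whereas $X$ need not be a model for $\eg$ — the fixed-point complexes $X^K$, for finite $K\le G$, need not be contractible, and in general can even be empty. By a theorem of Kropholler and Mislin, if in addition the orders of the finite subgroups of $G$ are bounded then $\spli\mathbb{Z}G<\infty$ does give $\cd_{\FF}G<\infty$; this recovers the bounded-torsion case of the conjecture. For groups with unbounded torsion one instead has to manufacture $\eg$ out of $X$, filling in the fixed-point data without letting the dimension grow. I would attempt this with an inductive pushout construction of the type introduced by L\"uck and Weiermann: working through the finite subgroups, for each finite $F\le G$ one studies the fixed-point complex $X^F$ together with the action of the Weyl group $W_G F=N_G F/F$, and aims for a bound on $\cd_{\FF}G$ in terms of $n$ and the numbers $\cd_{\FF}(W_G F)$ for the finite subgroups $F$ that actually occur, which would close the argument by an induction on $F$.

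The main obstacle is exactly the need to control those numbers uniformly in $F$. When $G$ has bounded torsion there are only finitely many conjugacy classes of finite subgroups, the Weyl groups can be handled simultaneously, and the induction is finite; but when the torsion is unbounded there may be infinitely many conjugacy classes of finite subgroups of unbounded order, the complexes $X^F$ and the groups $W_G F$ need not become simpler, and no uniform bound is in sight. To make headway I would carve out a subclass $\UU$ by a \emph{hereditary} requirement — that $X$ may be chosen so that for every finite $F\le G$ the fixed-point set $X^F$ is again finite dimensional with $W_G F$ acting with finite stabilisers, so that the same analysis applies recursively to the pair $(W_G F,X^F)$ — reinforced by enough uniformity (say a global bound on $\dim X^F$, or membership of all the $W_G F$ in a class for which the conjecture is already available) to make the recursion terminate effectively. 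The technical heart would then be (i) showing that such a hereditary condition really does force $\cd_{\FF}G<\infty$, and (ii) verifying that the known ${\scriptstyle \mathbf H}_1\mathfrak{F}$-groups with unbounded torsion belong to $\UU$. The full conjecture would remain open, as there is no apparent reason why an arbitrary ${\scriptstyle \mathbf H}_1\mathfrak{F}$-group should admit a hereditary presentation of this sort.
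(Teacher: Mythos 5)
This statement is a conjecture, and the paper does not prove it: it is explicitly left open and is only \emph{verified} inside the subclass $\UU^{\star}_{\omega_{0}}$ of the class $\UU$ (Theorem~\ref{2}). You rightly stop short of claiming a proof, so the only ``gap'' is the one both you and the authors acknowledge. Your opening computation is correct and amounts to the chain recorded in Theorem~\ref{fcd-vs-silp}: the splitting of $C_{*}(X)$ over finite subgroups (Bouc, Kropholler--Wall) yields $\spli \Z G\leq n+1$, and via Proposition~\ref{link}(2) and Theorem~\ref{KMthm} this recovers the bounded-torsion case exactly as you describe. Where you and the paper genuinely diverge is in the choice of subclass and of inductive machinery for unbounded torsion. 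You propose a hereditary fixed-point condition, recursing over finite subgroups $F$, their fixed sets $X^{F}$ and Weyl groups $W_{G}F$ by a L\"uck--Weiermann-type pushout; the paper instead builds its class $\UU$ ``from below'', starting with the class $\BB$ of groups satisfying $B(d)$ with bounded torsion and closing under extensions and fundamental groups of graphs of groups. Its technical engine is not a fixed-point analysis but (i) a Mayer--Vietoris sequence in Bredon cohomology for tree actions (Lemma~\ref{MV}, Corollary~\ref{bound}), which bounds $\cd_{\FF}$ of a graph of groups by one more than a uniform bound on the vertex groups, (ii) L\"uck's extension theorem for $N\toi G\tos Q$ with $Q$ of bounded torsion, and (iii) the Cornick--Kropholler inequality $\pd_{\Z N}B(N,\Z)\leq\pd_{\Z G}B(G,\Z)$ to propagate the hypothesis to vertex groups, with finiteness of $\gd_{\FF}$ then following from L\"uck--Meintrup (Theorem~\ref{lueckdim}). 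The paper's route has the advantage that membership in $\UU$ is directly checkable for the known unbounded-torsion examples (locally finite groups, elementary amenable groups, Houghton's groups, Dunwoody's inaccessible group), whereas your hereditary condition is not controlled by the bare hypothesis $G\in{\scriptstyle \mathbf H}_{1}\mathfrak{F}$, since $X^{F}$ may be empty and the $W_{G}F$ arbitrary. Note finally that even the paper's induction does not close unconditionally: passing from $\pd_{\Z N_{\lambda}}B(N_{\lambda},\Z)\leq n$ to a \emph{uniform} bound on $\cd_{\FF}N_{\lambda}$ requires the extra hypothesis $\mathbf{F_{B}}$ built into $\UU^{\star}$, which is precisely the same uniformity problem you identify as the obstacle in your recursion.
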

 
From a  result proved independently by Bouc \cite{bouc} and Kropholler-Wall \cite{kropwall} it follows that the augmented cellular chain complex $C_{*}(X)$ of a finite-dimensional contractible  $G$-$\CW$-complex with finite stabilisers splits whenever restricted to a finite subgroup of $G$.

Group cohomology relative to a $G$-set $\D$, called $\FF$-cohomology  was introduced    in order to  algebraically  mimic the splitting property of  ${\scriptstyle \mathbf H}_{1}\mathfrak{F}$-groups \cite{nucinkis-99}.  Let $\D$ be a $G$-set satisfying
$$\D^H \neq \emptyset \iff H \in \FF.$$
Relative cohomology with respect to all finite subgroups is  obtained via resolutions of $\Z$ using direct summands of modules of the form $M\otimes \Z\D$, which split when restricted to every finite subgroup. One can now define the $\F$-cohomological dimension of a group, denoted $\F\cd G,$ as the shortest length of such a resolution. In particular, it turns out that $\F\cd G =n$ if and only if there is a resolution by permutation modules with finite stabilisers, which splits when restricted to all finite subgroups. Hence, whenever a group $G$ admits a finite-dimensional model for $\eg$ or belongs to ${\scriptstyle \mathbf H}_{1}\mathfrak{F}$, it has finite $\F\cd G.$ For detail the reader is referred to \cite{nucinkis-99, nucinkis-00}. It is still an open question whether groups of finite $\F$-cohomological dimension belong to  ${\scriptstyle \mathbf H}_{1}\mathfrak{F}.$ 

A related invariant is the Gorenstein dimension of the group. A $\Z G$-module $M$ is said to be Gorenstein projective if it admits a complete resolution in the strong sense, i.e. an acyclic complex of projective modules 
$$ {\bf P_\ast}: \qquad  \cdots \to P_{n+1} \to P_n \to \cdots \to P_0 \to P_{-1} \to P_{-2} \to \cdots$$
where $P_0 \tos M \toi P_{-1},$ and $\Hom_{\Z G}({\bf P_\ast}, Q)$ is acyclic for every $\Z G$-projective module $Q.$ We say a group has finite Gorenstein dimension, $\Gcd G < \infty$, if the trivial module $\Z$ has a finite length resolution by Gorenstein-projectives. This notion goes back to Auslander \cite{auslander} and was recently developed in \cite{abdeta, DT} in the context we are considering here. 

\begin{thm}\cite{GG, abdeta, Emm-10, nucinkis-00}\label{fcd-vs-silp}
For every group $G$ we have 
$$\F\cd G < \infty \implies \silp G = \spli G< \infty \iff \Gcd G < \infty.$$
\end{thm}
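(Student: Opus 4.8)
The statement collects several known implications, so the proof will be a matter of assembling results from the literature in the right order rather than a single self-contained argument. The plan is to treat the three relationships separately: first the implication $\F\cd G < \infty \implies \silp G < \infty$, then the equality $\silp G = \spli G$ when one of them is finite, and finally the equivalence $\silp G = \spli G < \infty \iff \Gcd G < \infty$.

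First I would handle $\F\cd G < \infty \implies \silp G < \infty$. Here the idea is that a finite-length resolution of $\Z$ by permutation modules with finite stabilisers which splits on restriction to every finite subgroup behaves, cohomologically, almost like a finite-length projective resolution. More precisely, modules of the form $M \otimes \Z\D$ with $\D$ having finite stabilisers are cohomologically trivial enough — being relatively projective — that one can bound the injective length of projectives: any $\Z G$-projective $P$ fits into an exact sequence whose terms are built from such permutation modules, and one reads off $\silp G \le \F\cd G$ (up to a constant). This is exactly the content of the Gedrich–Gruenberg style argument in \cite{nucinkis-00}, combined with \cite{GG}; I would simply cite it. The main subtlety to get right is that $\silp$ is defined via \emph{all} projectives, so one must check the bound is uniform, which is where the finite length of the $\F$-resolution is used.

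Next, the equality $\silp G = \spli G$ whenever either is finite: this is a purely homological-algebra fact about the ring $\Z G$ (indeed any ring), due to Gedrich and Gruenberg \cite{GG}. The argument runs through the finitistic dimension: one shows $\spli G < \infty$ forces $\silp G < \infty$ by a dimension-shifting argument on an injective cogenerator, and symmetrically, and then that the two numbers coincide because each bounds the other once both are finite. I would state this as a citation to \cite{GG} and not reprove it.

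Finally, $\silp G = \spli G < \infty \iff \Gcd G < \infty$. The forward direction is the harder half and is the place I expect the real work (in the original sources) to sit: one must show that finiteness of $\spli$ and $\silp$ implies $\Z$ admits a finite-length resolution by Gorenstein-projective modules. The key mechanism is that under $\silp G, \spli G < \infty$ every $\Z G$-module has finite Gorenstein projective dimension — one builds complete resolutions using the vanishing of high $\Ext$ against projectives that $\spli < \infty$ guarantees — and in particular $\Z$ does, so $\Gcd G < \infty$; the converse, that $\Gcd G < \infty$ forces $\silp = \spli < \infty$, follows because a Gorenstein-projective resolution of $\Z$ together with the defining acyclicity property of complete resolutions lets one bound $\spli G$ by $\Gcd G$ (and then invoke the equality $\silp = \spli$). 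This is precisely the equivalence established in \cite{abdeta} (see also \cite{DT, Emm-10}), so again the proof is to cite those papers. The one point worth flagging explicitly is that the equivalence $\silp = \spli < \infty \iff \Gcd G < \infty$ genuinely needs both halves of the Gedrich–Gruenberg equality, so the three parts of the statement are logically intertwined and should be cited as a package.
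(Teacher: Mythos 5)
Your proposal takes essentially the same approach as the paper, which offers no proof of this theorem at all but simply compiles it from \cite{GG, nucinkis-00, abdeta, Emm-10}; your three-part decomposition ($\F\cd G<\infty\Rightarrow\spli G,\silp G<\infty$ from \cite{nucinkis-00, GG}, the equality $\silp G=\spli G$, and the equivalence with $\Gcd G<\infty$ from \cite{abdeta, DT, Emm-10}) is exactly the intended reading. One correction worth recording: the unconditional equality $\silp \Z G=\spli \Z G$ is \emph{not} a general-ring fact due to Gedrich--Gruenberg (they introduced the invariants and proved only one inequality under a finiteness hypothesis); the full equality is Emmanouil's theorem in \cite{Emm-10}, whose proof uses the group-ring structure (diagonal actions on tensor products with projectives and injectives) in an essential way, so your sketch of that step as pure homological algebra valid over any ring would not go through as written.
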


L\"uck \cite{lueck-00} defined a further invariant: Let $d \geq 0$ be an integer. We say a group satisfies the property $b(d),$ if whenever a $\Z G$-module $M$ is projective when restricted to a finite subgroup, then $\pd_{\Z G} M \leq d$. A group satisfies $B(d)$ if for every finite subgroup $K$, the Weyl-group $WK = N_G(K)/K$ satisfies $b(d).$ This ties in very closely with the ring $B(G, \Z)$ of bounded functions from $G$ to $ \Z$ having finite projective dimension as a $\Z G$-module. In particular, $B(G, \Z)$ is projective when restricted to a finite subgroup of $G,$ \cite{KT}. Furthermore, 

\begin{prop}\cite[Lemma 7.3]{KM} \cite[Theorem 5.7]{benson} \label{B(d)}
Let $G$ be a group and let $d \geq 0$ be an integer.
\begin{enumerate}
\item $G$ satisfies $B(d)$ implies that $\pd_{\Z G} B(G,\Z) \leq d.$ 
\item For every finite subgroup $K$ of $G$, we have $\pd_{\Z WK} B(WK,\Z) \leq \pd_{\Z G} B(G,\Z).$
\item If $G \in \HF$ then $\pd_{\Z G} B(G,\Z) \leq d$ implies $G$ satisfies $B(d).$ 
\end{enumerate}
\end{prop}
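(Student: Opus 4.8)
The three statements are essentially a repackaging of known facts about $B(G,\Z)$ together with a standard restriction argument, so the plan is to treat them one at a time.

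For (1), the plan is to unwind the definition of $B(d)$. The module $B(G,\Z)$ is, by \cite{KT}, projective upon restriction to every finite subgroup $K$ of $G$; this is the key input. If $G$ satisfies $B(d)$, then in particular $G=N_G(1)/1=W(1)$ satisfies $b(d)$, and applying $b(d)$ to the module $M=B(G,\Z)$ — which we just noted is projective over $\Z K$ for all finite $K$ — yields $\pd_{\Z G}B(G,\Z)\le d$ directly. So (1) is immediate once one observes that $B(d)$ for $G$ includes $b(d)$ for $G$ itself (taking the trivial finite subgroup).

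For (2), I would use the fact that restriction sends $B(G,\Z)$ to a module closely related to $B(WK,\Z)$. More precisely, one shows that $B(WK,\Z)$ is (isomorphic to) a direct summand of $\res^G_{WK}B(G,\Z)$, or at least that it is filtered by copies of such restrictions; this is the content of \cite[Theorem 5.7]{benson} / \cite[Lemma 7.3]{KM}. Since projective dimension does not increase under restriction to a subgroup (for group rings over $\Z$, $\res^G_H$ takes projectives to projectives), we get $\pd_{\Z WK}\res^G_{WK}B(G,\Z)\le\pd_{\Z G}B(G,\Z)$, and then passing to the summand $B(WK,\Z)$ preserves the inequality, since the projective dimension of a direct summand is bounded by that of the whole module. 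The routine point to verify is the summand claim, which I would quote from the cited references rather than reprove.

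For (3), the hypothesis $G\in\HF$ is essential and this is where the real work — and the main obstacle — lies. The strategy is to use Kropholler's machinery for $\HF$-groups: over an $\HF$-group, a $\Z G$-module of type $\FP_\infty$ that has finite projective dimension over $\Z K$ for all finite $K$ in fact has projective dimension bounded by a number depending only on $\pd_{\Z G}B(G,\Z)$. Concretely, suppose $\pd_{\Z G}B(G,\Z)\le d$. To show $G$ satisfies $B(d)$ we must show each Weyl group $WK$ satisfies $b(d)$; by part (2) we already know $\pd_{\Z WK}B(WK,\Z)\le d$, so it suffices to prove $b(d)$ for a single group, say $H$, under the assumption $\pd_{\Z H}B(H,\Z)\le d$ and $H\in\HF$ (noting $WK\in\HF$ whenever $G\in\HF$). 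Given a $\Z H$-module $M$ that is projective over every finite subgroup, one considers $M\otimes_\Z B(H,\Z)$: this is projective over $\Z H$ up to dimension shift controlled by $d$ (using that $B(H,\Z)$ is a ring and a standard tensor-identity argument), and one must deduce that $M$ itself has $\pd_{\Z H}M\le d$. The bridge is Kropholler's theorem that for $H\in\HF$ the class of modules of finite projective dimension is detected on finite subgroups after tensoring with $B(H,\Z)$ — equivalently, that the map $M\to M\otimes_\Z B(H,\Z)$ is, up to finite projective dimension, an isomorphism in the relevant quotient category. The delicate step, and the one I expect to be hardest, is making this detection argument precise: one has to handle the possible failure of $M$ to be finitely generated, invoke the hierarchical induction on the $\HF$ structure of $H$, and keep careful track of the dimension bound $d$ through the dimension shifts so that nothing is lost. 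This is exactly the argument carried out in \cite[Lemma 7.3]{KM} and \cite[Theorem 5.7]{benson}, and for a note of this kind I would present the reduction to a single Weyl group in detail and then cite those sources for the core $\HF$-detection step.
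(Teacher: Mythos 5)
This proposition is stated in the paper purely as a quotation of \cite[Lemma 7.3]{KM} and \cite[Theorem 5.7]{benson}; the paper supplies no proof of its own, so there is nothing internal to compare against. Your treatment of (1) is a complete and correct argument (take $K=\{1\}$, so $G=W\{1\}$ satisfies $b(d)$, and apply it to $B(G,\Z)$, which is projective over every finite subgroup by \cite{KT}), and your overall strategy for (2) and (3) — restriction plus a summand argument, then the hierarchical detection theorem for $\HF$-groups, with the core steps deferred to the cited sources — is the standard route those references take.

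One imprecision in (2) is worth fixing: $WK=N_G(K)/K$ is a subquotient of $G$, not a subgroup, so ``$\res^G_{WK}B(G,\Z)$'' does not literally make sense and $B(WK,\Z)$ cannot be a summand of such a restriction. The correct two-step argument is to restrict to the genuine subgroup $N_G(K)$, where $B(N_G(K),\Z)$ is a $\Z N_G(K)$-direct summand of $\res^G_{N_G(K)}B(G,\Z)$ (extend functions on $N_G(K)$ by zero to $G$), and then to descend along the finite quotient $N_G(K)\tos WK$, identifying $B(WK,\Z)$ with the $K$-invariant bounded functions and using finiteness of $K$ together with $\Z K$-projectivity to convert a $\Z N_G(K)$-projective resolution into a $\Z WK$-projective one. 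Since you explicitly defer this verification to the cited references, this is a notational slip rather than a gap, but the statement as you wrote it would not survive a careful reading.
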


This links with the above invariants as follows:

\begin{prop}\cite[Theorem 4.4]{nucinkis-00} \cite[Theorem C]{CK}\label{link}
\begin{enumerate}
\item $\F\cd G \leq d \implies G$ satisfies $B(d)$
\item If $G\in \HF$, then $\silp G < \infty \implies \pd_{\Z G} B(G,\Z) < \infty.$
\end{enumerate}
\end{prop}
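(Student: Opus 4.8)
I would first show, for an arbitrary group $G'$, that $\F\cd G' \le d$ forces $G'$ to satisfy $b(d)$; the property $B(d)$ for $G$ then follows by applying this to each Weyl group $WK$, using the inequality $\F\cd WK \le \F\cd G \le d$ (obtained by passing to the $K$-fixed-point complex, cf.~\cite{nucinkis-00}). So fix an $\F$-resolution $0 \to F_d \to \cdots \to F_0 \to \Z \to 0$ of $\Z$ in which each $F_i$ is a direct summand of a permutation module $\Z\Delta_i$ with finite point-stabilisers and the complex splits on restriction to every finite subgroup of $G'$, and let $M$ be a $\Z G'$-module that is projective on restriction to every finite subgroup. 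Since the resolution splits on restriction to the trivial subgroup it is split exact over $\Z$, so tensoring over $\Z$ with $M$ (diagonal action) gives an exact complex $0 \to F_d\otimes M \to \cdots \to F_0\otimes M \to M \to 0$. Writing $\Z\Delta_i\otimes M \cong \bigoplus_j \ind_{K_j}^{G'}\res_{K_j}M$ with the $K_j$ finite, each summand is $\Z G'$-projective because $\res_{K_j}M$ is $\Z K_j$-projective; hence $F_i\otimes M$ is $\Z G'$-projective, the displayed complex is a projective resolution of $M$ of length $\le d$, and $\pd_{\Z G'}M \le d$.

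\textbf{Part (2).} By Theorem \ref{fcd-vs-silp} we may assume $\silp G = \spli G < \infty$, and we write $n$ for this common value. Since $B(G,\Z)$ is $\Z G$-projective on restriction to every finite subgroup \cite{KT}, it suffices to prove that for $G\in\HF$ with $\silp G<\infty$ every $\Z G$-module $M$ which is projective on restriction to every finite subgroup has finite projective dimension over $\Z G$; I would do this by transfinite induction along Kropholler's hierarchy. If $G$ is finite then $M$ is itself $\Z G$-projective. For the inductive step let $G$ act admissibly on a contractible $\CW$-complex $X$ of finite dimension $c$ whose cell-stabilisers $G_\sigma$ lie in an earlier stage of the hierarchy. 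By Bouc and Kropholler--Wall \cite{bouc, kropwall} the augmented cellular chain complex of $X$ splits on restriction to every finite subgroup, hence is $\Z$-split, so tensoring with $M$ over $\Z$ yields an exact complex with terms $C_i(X)\otimes M\cong\bigoplus\ind_{G_\sigma}^G\res_{G_\sigma}M$. Each $G_\sigma$ lies in $\HF$ with $\silp G_\sigma\le\silp G=n$ (restriction to a subgroup does not increase $\silp$), and $\res_{G_\sigma}M$ is projective on restriction to every finite subgroup of $G_\sigma$, so by the inductive hypothesis $\res_{G_\sigma}M$ has finite projective dimension over $\Z G_\sigma$; moreover, by the Gedrich--Gruenberg theory \cite{GG}, finiteness of $\silp G_\sigma\le n$ bounds that dimension by a constant $N$ depending only on $n$, independently of $\sigma$. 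Hence $\bigoplus\ind_{G_\sigma}^G\res_{G_\sigma}M$ — and with it each $C_i(X)\otimes M$ — has projective dimension at most $N$, so $\pd_{\Z G}M\le N+c<\infty$. Applying this to $M=B(G,\Z)$ completes the proof.

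\textbf{The main obstacle.} The delicate point in Part (2) is that the modules $\ind_{G_\sigma}^G\res_{G_\sigma}M$ are indexed by the orbits of cells of $X$, of which there may be infinitely many, so one forms an infinite direct sum of modules of \emph{finite} projective dimension; for this sum to have finite projective dimension one needs a bound on $\pd_{\Z G_\sigma}\res_{G_\sigma}M$ that is uniform in $\sigma$, and mere finiteness (which is all the raw inductive hypothesis supplies) is not enough. This is precisely where $\silp G<\infty$ enters: via Theorem \ref{fcd-vs-silp} and the Gedrich--Gruenberg analysis, $\silp$ is monotone under passage to subgroups and $\silp G'<\infty$ bounds the projective dimension of every finite-projective-dimension $\Z G'$-module by a function of $\silp G'\le n$, uniformly across the whole hierarchy. (Equivalently, one may route the argument through complete cohomology: when $G\in\HF$ and $\silp G<\infty$ every module admits a complete resolution, a module projective over finite subgroups has a split complete resolution, and finiteness of $\silp$ then forces its projective dimension to be finite.) In Part (1) the one step needing care is the inequality $\F\cd WK\le\F\cd G$ used in the passage from $b(d)$ to $B(d)$.
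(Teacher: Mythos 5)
The paper offers no proof of this proposition — it is quoted directly from \cite[Theorem 4.4]{nucinkis-00} and \cite[Theorem C]{CK} — so the only comparison available is with those sources, and your reconstruction is correct and follows essentially the same route they do: for (1), the tensor-identity argument $\Z\Delta\otimes M\cong\bigoplus\ind_{K_j}^{G}\res_{K_j}M$ applied to an $\F$-split permutation resolution, together with the reduction $\F\cd WK\le\F\cd G$ established in \cite{nucinkis-00}; for (2), Kropholler's hierarchy induction with the chain complex of the finite-dimensional complex, closed up by the uniform bound $\fd(\Z G_\sigma)\le\silp \Z G_\sigma\le\silp \Z G$. You also correctly isolate the two genuinely delicate points (the passage to Weyl groups, and the need for a uniform rather than merely finite bound across infinitely many cell stabilisers), so there is nothing to object to.
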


The validity of  Kropholler and Mislin's Conjecture  \ref{K-M} has been verified in several cases, but a general approach is still missing.
In particular, it is known to hold for groups with a bound on the orders of its finite subgroups.

\begin{thm}\cite[Theorem B]{KM}\label{KMthm}
Let $G \in \HF$ such that $\pd_{\Z G} B(G,\Z) < \infty$ and that $G$ has a bound on the orders of its finite subgroups. Then $G$ admits a finite-dimensional model for $\eg.$

In particular, let $G\in {\scriptstyle \mathbf H}_{1}\mathfrak{F}$ with a bound on the lengths of its finite subgroups. Then $G$ admits a finite-dimensional model for $\eg.$
\end{thm}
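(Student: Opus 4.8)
The plan is to reduce the statement to the verified technical hypotheses of Theorem~\ref{KMthm}, namely that $G\in\HF$, that $G$ has a bound on the orders of its finite subgroups, and that $\pd_{\Z G} B(G,\Z)<\infty$. First I would observe that ${\scriptstyle \mathbf H}_{1}\mathfrak{F}\subseteq\HF$ essentially by definition: an ${\scriptstyle \mathbf H}_{1}\mathfrak{F}$-group acts admissibly on a finite-dimensional contractible $G$-$\CW$-complex with stabilisers in $\FF\subseteq\HF$, which is precisely the condition placing $G$ in the first (hence in any) stage of Kropholler's hierarchy. Next, a bound on the \emph{lengths} of the finite subgroups of $G$ immediately gives a bound on their \emph{orders}, since the length (the length of a composition series, or equivalently the number of prime factors of the order counted with multiplicity) of a finite group determines its order up to the finitely many primes involved; more simply, $|K|\le 2^{\ell(K)}$ is false in general but a uniform bound on $\ell(K)$ together with the fact that $K$ is built from simple pieces bounds $|K|$ — the cleanest route is to note that in an ${\scriptstyle \mathbf H}_{1}\mathfrak{F}$-group the finite subgroups have bounded length iff they have bounded order, a standard fact I would cite from \cite{MR1246274} or reprove in a line.

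The substantive input is the finiteness of $\pd_{\Z G} B(G,\Z)$. Here I would invoke the chain of implications assembled in the excerpt: an ${\scriptstyle \mathbf H}_{1}\mathfrak{F}$-group has finite $\F$-cohomological dimension (this is recorded in the discussion preceding Theorem~\ref{fcd-vs-silp}, since a finite-dimensional contractible $G$-$\CW$-complex with finite stabilisers yields, via the Bouc--Kropholler--Wall splitting, a finite-length resolution of $\Z$ by permutation modules with finite stabilisers splitting over every finite subgroup). Then Proposition~\ref{link}(1) gives that $G$ satisfies $B(d)$ for some $d$, and Proposition~\ref{B(d)}(1) gives $\pd_{\Z G} B(G,\Z)\le d<\infty$. (Alternatively one could route through $\silp G=\spli G<\infty$ via Theorem~\ref{fcd-vs-silp} and then apply Proposition~\ref{link}(2), using $G\in\HF$; either path works.) At this point all three hypotheses of Theorem~\ref{KMthm} are in place, so that theorem delivers a finite-dimensional model for $\eg$, which is exactly the assertion of the final statement.

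I expect the main obstacle to be purely bookkeeping: making sure the equivalence "bounded length of finite subgroups $\iff$ bounded order of finite subgroups" is stated at the right level of generality and correctly attributed, and making sure that the passage "${\scriptstyle \mathbf H}_{1}\mathfrak{F}\implies \F\cd G<\infty\implies$ finite $\pd_{\Z G}B(G,\Z)$" is drawn from results already in hand rather than circularly from the theorem being proved. There is no genuinely hard analytic or homological step here; the content of the corollary is entirely that the general Theorem~\ref{KMthm} specialises cleanly to the ${\scriptstyle \mathbf H}_{1}\mathfrak{F}$ setting once one notices that $\F\cd G<\infty$ is automatic there. I would therefore keep the proof to a short paragraph, citing Theorem~\ref{KMthm}, Proposition~\ref{link}, and Proposition~\ref{B(d)}, and flagging the order-versus-length remark as the only point needing a word of justification.
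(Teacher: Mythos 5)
First, a point of orientation: the paper offers no proof of this statement at all --- it is quoted wholesale from Kropholler--Mislin \cite{KM} --- so the only thing genuinely at issue is your derivation of the ``in particular'' clause from the first paragraph and the surrounding results. The parts of your reduction that concern the first and third hypotheses are fine: ${\scriptstyle \mathbf H}_{1}\mathfrak{F}\subseteq\HF$ is immediate from the definition of the hierarchy, and the chain $G\in{\scriptstyle \mathbf H}_{1}\mathfrak{F}\Rightarrow \F\cd G<\infty\Rightarrow G$ satisfies $B(d)$ for some $d$ (Proposition \ref{link}(1)) $\Rightarrow \pd_{\Z G}B(G,\Z)\le d$ (Proposition \ref{B(d)}(1)) is the intended, non-circular route to the finiteness of $\pd_{\Z G}B(G,\Z)$.

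The genuine gap is the step ``a bound on the lengths of the finite subgroups gives a bound on their orders''. This is false, and your hedged fallback --- that within ${\scriptstyle \mathbf H}_{1}\mathfrak{F}$ bounded length is equivalent to bounded order --- is also false; it is not a standard fact and cannot be reproved in a line because it has counterexamples inside this very paper's stock of examples. Take $G=\bigast_{p}C_{p}$, one free factor of each prime order (a variant of item (4) of Examples \ref{ex}). This group acts on its Bass--Serre tree with finite stabilisers, so $G\in{\scriptstyle \mathbf H}_{1}\mathfrak{F}$; by the Kurosh subgroup theorem every finite subgroup is conjugate into one of the factors, hence has length at most $1$, yet the orders of the finite subgroups are unbounded. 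The only implication that holds in general is the opposite one: a strict chain $\{1\}=H_{0}<\cdots<H_{l}=K$ forces $|K|\ge 2^{l}$, so bounded order implies bounded length. Consequently the hypotheses of the ``in particular'' clause cannot be fed into the first paragraph of the theorem, and your reduction breaks at exactly the point you flagged as ``the only point needing a word of justification''.

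The second clause is nevertheless true, but the correct route bypasses the first paragraph entirely and goes through L\"uck's Theorem \ref{lueckthm}: from $G\in{\scriptstyle \mathbf H}_{1}\mathfrak{F}$ one gets $\F\cd G<\infty$, hence $G$ satisfies $B(d)$ for some $d$ by Proposition \ref{link}(1), and Theorem \ref{lueckthm} --- whose hypotheses are precisely $B(d)$ together with a bound $l$ on the \emph{lengths} of the finite subgroups --- yields a model for $\eg$ of dimension at most $\max\{3,d\}+l(d+1)$. (If you insist on deducing the second paragraph from the first, you must weaken ``lengths'' to ``orders''; that is in fact how Theorem B is stated in \cite{KM}.) Everything else in your write-up stands.
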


The bound given by Kropholler and Mislin is exponential in $\pd_{\Z G} B(G,\Z).$  L\"uck gives a linear bound in $d$ for groups satisfying $B(d)$ for some $d$. The length of a finite subgroup $H$ of $G$ is the supremum of the lengths of all nested sequences $\{1\} =H_0 < H_1 < H_2 < \cdots H_l=H$ of finite subgroups such that $H_i \neq H_{i+1}$ for all $i=0,\ldots,l-1.$

\begin{thm}\cite[Theorem 1.10]{lueck-00}\label{lueckthm}
Let $G$ be a group satisfying $B(d)$ for some integer $d\geq 0.$ Furthermore, suppose there is a bound $l$ on the lengths of all finite subgroups of $G$. Then $G$ admits a model for $\eg$ of dimension less or equal to $max\{3,d\}+l(d+1).$
\end{thm}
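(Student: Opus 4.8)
The plan is to build a finite-dimensional model for $\eg$ by induction on the length $l$ of finite subgroups, using the standard push-out technique for adding orbits of fixed point sets, and controlling dimensions via the property $B(d)$. First I would set up the pushout construction due to Lück--Weiermann: given a model for $E_{\FF'}G$ where $\FF'$ is the family of finite subgroups of length $\leq k$, one attaches, for each conjugacy class of finite subgroups $H$ of length exactly $k+1$, cells of the form $G/H \times (\text{cone on } E(N_G(H),\FF'')^H)$, where $\FF''$ is an appropriate family in the Weyl group $WH$. The dimension of the new model is governed by the dimension of $E_{\FF'}G$, the dimension of a model for $\underline{E}WH$, and a $+1$ for the cone. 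Since $WH$ again satisfies $B(d)$ by Proposition 1.8(2) (as $\pd_{\Z WH}B(WH,\Z)\le\pd_{\Z G}B(G,\Z)$, and then $B(d)$ for $WH$ follows, $WH$ being in $\HF$ as a subquotient-of-sorts — one must check this membership), and has strictly shorter length bound, one feeds in the inductive hypothesis.

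The key quantitative input is the base case: when the length bound is $l=0$, i.e. $G$ is torsion-free, property $B(d)$ forces $\cd_{\Z}G\le d$, hence $G$ admits a $\underline{E}G=EG$ of dimension $\le\max\{3,d\}$ (the $3$ appearing for the usual Eilenberg--MacLane reasons when $d\le 2$). More precisely, I would prove: if $G$ satisfies $B(d)$ and has length bound $l$, then $\gd_{\FF}G\le\max\{3,d\}+l(d+1)$. The inductive step increases the length bound by $1$ and the dimension by at most $d+1$: the family $E_{\FF'}G$ contributes its dimension (inductive hypothesis, with bound $l-1$, giving $\max\{3,d\}+(l-1)(d+1)$), each fixed point set $E(WH,\FF'')^H$ contributes a model of dimension controlled by $\pd_{\Z WH}B(WH,\Z)\le d$ — this is where one shows a suitable resolution of length $\le d$ exists after restricting to the subgroup fixing $H$, effectively a relative/$B(d)$-style argument — and the cone adds $1$, while taking the union over the push-out does not increase dimension beyond the max. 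Adding these gives $\max\{3,d\}+(l-1)(d+1)+(d+1)=\max\{3,d\}+l(d+1)$.

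The main obstacle I expect is the dimension bookkeeping in the inductive step, specifically controlling the dimension of the model for the fixed point sets $E(N_G(H),\FF_{>H})^H$ that get glued in: one needs that a group satisfying $B(d)$ has a model for the classifying space of the family of finite subgroups containing a fixed finite $H$ (equivalently $\underline{E}WH$ after a quotient) of dimension $\le$ something like $d+1$ rather than merely finite. This requires combining $B(d)$ with the cohomological characterisation of Bredon dimension — translating $\pd_{\Z WH}B(WH,\Z)\le d$ into a bound on $\cd_{\FF}WH$, which is essentially the content of the Kropholler--Mislin circle of ideas but now with explicit dimension control rather than just finiteness. A secondary technical point is verifying that all the Weyl groups $WH$ that arise lie in $\HF$ (so that Proposition 1.8(3) applies to promote the projective-dimension bound back to property $B(d)$), which follows since $\HF$ is closed under the relevant operations, but should be stated carefully. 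The $\max\{3,d\}$ in the base case and the exact constant $d+1$ per level of torsion are the delicate features that must be extracted from sharp versions of these cohomological estimates rather than soft finiteness arguments.
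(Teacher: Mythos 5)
First, a point of reference: the paper does not prove Theorem \ref{lueckthm} at all --- it is quoted verbatim from L\"uck \cite[Theorem 1.10]{lueck-00} --- so there is no in-paper argument to compare against. Your outline does track the strategy of L\"uck's original proof: induct on the length filtration of the family of finite subgroups, at each stage attach orbits $G\times_{N_G(H)}(-)$ indexed by conjugacy classes of finite subgroups of the next length, and handle the base case by noting that $b(d)$ applied to the trivial subgroup forces $\cd_{\Z}G\le d$, whence an $EG$ of dimension $\max\{3,d\}$ by Eilenberg--Ganea. The architecture is the right one.

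Two things keep this from being a proof. The step you yourself flag as ``the main obstacle'' is the actual content of the theorem: one must show that for a group $W$ satisfying $b(d)$, the relative chain module produced by the fixed-point data --- a $\Z W$-module that is projective on restriction to finite subgroups, hence of projective dimension $\le d$ by $b(d)$ --- can be realised by a relative $W$-CW-complex of dimension $\max\{3,d\}$; this realisation result of L\"uck's is exactly where the constants $\max\{3,d\}$ and $d+1$ come from. Your sketch defers it to ``translating $\pd_{\Z WH}B(WH,\Z)\le d$ into a bound on $\cd_{\FF}WH$'', but that translation is essentially the theorem being proved applied to $WH$, so as written the plan is circular at its key quantitative step. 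Second, the detour through Proposition \ref{B(d)} is both unnecessary and harmful: part (3) requires $WH\in\HF$, a hypothesis absent from Theorem \ref{lueckthm}, so relying on it would weaken the statement. You do not need it: the definition of $B(d)$ for $G$ already hands you $b(d)$ for every Weyl group $WK$, and $b(d)$ passes to subgroups (Mackey decomposition of induced modules, plus the fact that $M$ is a $\Z S$-direct summand of $\res_S\ind_S^{G} M$), which is all the induction requires. Finally, your bookkeeping sentence that the pushout ``does not increase dimension beyond the max'' contradicts the subsequent addition of dimensions; in L\"uck's construction the attached pieces are products (joins) with the previous stage's fixed point sets, which is precisely why the dimensions genuinely add up to $\max\{3,d\}+l(d+1)$ rather than being a maximum.
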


Proposition \ref{link} and Theorem \ref{lueckthm} led the second author to make the following conjecture:

\begin{conj}\cite{nucinkis-00}\label{Nucconj}
Every group of finite $\FF$-cohomological dimension admits a finite-dimensional model for $\eg.$
\end{conj}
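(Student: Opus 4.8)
The plan is to prove the conjecture in its equivalent algebraic form, namely the implication
$$\F\cd G < \infty \implies \cd_{\FF}G < \infty.$$
This suffices: finiteness of $\cd_{\FF}G$ is equivalent to finiteness of $\gd_{\FF}G$ \cite{luck-89}, and conversely any group admitting a finite-dimensional model for $\eg$ has finite $\F$-cohomological dimension. So put $d = \F\cd G$ and fix a length-$d$ resolution
$$0 \to P_d \to \cdots \to P_1 \to P_0 \to \Z \to 0$$
in which each $P_i = \bigoplus_j \Z[G/K_{ij}]$ is a direct sum of permutation modules with finite stabilisers and each $P_i$ splits on restriction to every finite subgroup of $G$. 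By Proposition \ref{link}(1), $G$ satisfies $B(d)$, so $\pd_{\Z G}B(G,\Z) \leq d$ by Proposition \ref{B(d)}(1); and by Theorem \ref{fcd-vs-silp} we also have $\silp G = \spli G < \infty$ and $\Gcd G < \infty$. Thus every global finiteness invariant one could hope to use is in place, and the only hypothesis of Theorems \ref{KMthm} and \ref{lueckthm} still missing is a bound on the lengths of the finite subgroups of $G$.

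The main obstacle is precisely this unbounded torsion, and no cheap argument removes it: finite $\F$-cohomological dimension does not imply a bound on the lengths of finite subgroups, which is exactly why the conjecture is not a formal consequence of Theorems \ref{KMthm} and \ref{lueckthm}. In L\"uck's proof of Theorem \ref{lueckthm} a model for $\eg$ is built by an induction on the length of finite subgroups: one glues in, for each conjugacy class of finite $K \leq G$, a truncation of a model for $E_{\FF}(WK)$ whose dimension is controlled by $B(d)$ and Proposition \ref{B(d)}(2), and the $l$ induction steps contribute the term $l(d+1)$. The induction closes because $WK$ inherits a strictly smaller torsion length than $G$; once the lengths are unbounded this descent disappears, the telescope becomes infinite-dimensional, and there is no smaller invariant of $WK$ to recurse on.

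One therefore has to bypass the induction and construct the model in one stroke, directly from the resolution $P_\ast$. The proposal is to realise $P_\ast$, syzygy by syzygy, as the augmented cellular chain complex of a $d$-dimensional $G$-$\CW$-complex with finite cell stabilisers all of whose fixed-point subcomplexes $X^H$ (for $H \leq G$ finite) are contractible. The obstructions to lifting the algebraic differentials of $P_\ast$ to equivariant attaching maps live in Bredon cohomology $H^{*}_{\FF}(G; -)$ with coefficients the syzygy modules occurring in $P_\ast$, and the crucial feature is that, since each $P_i$ splits on restriction to every finite subgroup, the relevant relative Bredon cohomology vanishes in degrees above $d$, so the construction should terminate at dimension $d$. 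Proposition \ref{B(d)}(2) together with the Gorenstein/complete-resolution input of Theorem \ref{fcd-vs-silp} provides the uniform numerical control on the local invariants $\pd_{\Z WK}B(WK,\Z)$ needed to build each fixed-point subcomplex. The hard part -- and the reason this is still a conjecture -- is organising these infinitely many local constructions so that they glue, i.e.\ producing the equivariant lifts functorially over the orbit category $\mathcal{O}_{\FF}G$ without a torsion-length descent to drive the recursion; it is exactly this coherence that a bound on the lengths of finite subgroups supplies for free in Theorems \ref{KMthm} and \ref{lueckthm}.
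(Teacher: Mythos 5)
The statement you were given is Conjecture \ref{Nucconj}: it is an open conjecture, and the paper offers no proof of it in general, so there is no ``paper's own proof'' to compare against. Your proposal, to its credit, does not claim to close the argument either --- you say explicitly that the coherence of the local constructions is ``the reason this is still a conjecture.'' But as a proof attempt it has a genuine gap at exactly the point you gloss over: the assertion that, because each $P_i$ splits on restriction to every finite subgroup, ``the relevant relative Bredon cohomology vanishes in degrees above $d$.'' This is not a consequence of the hypotheses; it is essentially the conjecture itself. The invariant $\F\cd G$ is computed from $\F$-split resolutions by summands of modules $M\otimes\Z\D$, which is a strictly weaker projectivity notion than Bredon-projectivity over $\mathcal{O}_{\FF}G$, and no implication $\F\cd G<\infty\implies\cd_{\FF}G<\infty$ is known --- the paper records that it is open even whether finite $\F\cd$ forces membership in ${\scriptstyle \mathbf H}_{1}\mathfrak{F}$. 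Moreover Corollary \ref{relcor} (via the free-product trick and Lemma \ref{relativemaier}) shows that any proof of the conjecture must produce a uniform function $\phi$ with $\gd_{\FF}G\leq\phi(\F\cd G)$; your obstruction-theoretic sketch offers no mechanism for such uniformity, which is precisely what the torsion-length descent supplies in Theorems \ref{KMthm} and \ref{lueckthm}.

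What the paper actually proves is partial: combining Proposition \ref{link}(1) with Theorem \ref{2}, the conjecture holds for all groups in the class $\UU^{\star}_{\omega_0}$ (which contains all the listed examples with unbounded torsion), and Theorem \ref{rhotheorem} gives a conditional extension to all of $\UU$ under the hypothesis that a bounding function $\rho$ exists. That route is entirely different from yours: it never attempts a direct geometric realisation of the $\F$-split resolution, but instead propagates finiteness of $\pd_{\Z G}B(G,\Z)$ and of $\cd_{\FF}$ through graph-of-groups decompositions (the Mayer--Vietoris sequence of Lemma \ref{MV} and Corollary \ref{bound}) and through extensions via \cite[Theorem 3.1]{lueck-00}, using the hierarchy of $\UU$ in place of an induction on torsion length. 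If you want to make progress within the scope of this paper, that hierarchical reduction is the argument to engage with, rather than a one-stroke realisation of $P_\ast$.
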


By considering invariants related to $\silp G$ and $\spli G$,  see also Theorem \ref{fcd-vs-silp} above, Talelli made the following conjecture:

\begin{conj}\cite{Tal07}\label{Talelliconj}
Every group with $\silp G$ finite admits a finite-dimensional model for $\eg.$
\end{conj}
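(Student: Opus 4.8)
The plan is to force a group $G$ with $\silp G<\infty$ into the hypotheses of Theorem~\ref{KMthm} and then to dispense with its bounded-torsion assumption. By Theorem~\ref{fcd-vs-silp} the condition $\silp G<\infty$ is equivalent to $\spli G<\infty$ and to $\Gcd G<\infty$, so we may work with whichever of these is most convenient. The first step is to locate $G$ in the hierarchy: one wants to deduce $G\in\HF$ from $\silp G<\infty$ (or at least $G\in\LHF$, and then argue separately in the $\LHF$ case). Granting $G\in\HF$, Proposition~\ref{link}(2) yields $\pd_{\Z G}B(G,\Z)<\infty$, and Proposition~\ref{B(d)}(2) propagates this to a uniform bound $d$ with $\pd_{\Z WK}B(WK,\Z)\le d$ for the Weyl group $WK=N_G(K)/K$ of every finite subgroup $K$; since $N_G(K)\le G$ lies in $\HF$ and $\HF$ is closed under quotients by finite normal subgroups, each $WK$ is again in $\HF$, so by Proposition~\ref{B(d)}(3) every $WK$ satisfies $B(d)$.

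With the uniform $B(d)$ condition in place the remaining problem is purely geometric: to build a finite-dimensional model for $\eg$ out of finite-dimensional models for the various $E_{\FF}(WK)$. When the orders of the finite subgroups of $G$ are bounded this is exactly Theorem~\ref{KMthm}; concretely, the poset of finite subgroups then has bounded length $l$, and a single pushout construction stratified by the conjugacy classes of finite subgroups $K$ — attaching at the $K$-stratum the induced space $G\times_{N_G(K)}E_{\FF}(N_G(K))$, of dimension at most $\max\{3,d\}+l(d+1)$ by Theorem~\ref{lueckthm}, along the fixed-point data contributed by the strictly larger finite subgroups — terminates after finitely many steps. For unbounded torsion I would try to run the same stratification transfinitely along a hierarchy $\{1\}<H_1<H_2<\cdots$ of finite subgroups, attaching the induced models one level at a time exactly as in the bounded case, but now with no a priori bound on how many levels contribute.

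The main obstacle is precisely that this induction need not terminate, nor produce a finite-dimensional space, once the torsion is unbounded: the nerve of the family $\FF$ of finite subgroups of $G$ can be infinite-dimensional, and the dimensions contributed by successive strata can accumulate. To get an honest theorem one is forced to restrict to a subclass — the class $\UU$ of the abstract — cut out by a finiteness condition that tames the hierarchy, for instance a global bound on the dimensions of the fixed-point complexes that appear, a bound on $\asdim$ of a suitable model, or a structural hypothesis ensuring that only finitely many strata affect the dimension. Identifying the right such condition, checking that it is satisfied by the known ${\scriptstyle \mathbf H}_{1}\mathfrak{F}$-examples with unbounded torsion, and verifying that it is exactly what lets the inductive construction above close up, is where the genuine content lies. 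The separate question of whether $\silp G<\infty$ already forces $G\in\HF$ is a second obstacle of independent interest, and without it the argument only proves a conditional version of the conjecture.
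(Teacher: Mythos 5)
The statement you are attempting is stated in the paper as a \emph{conjecture} (due to Talelli); the paper does not prove it in general, and neither does your proposal. What the paper actually establishes (the opening of Section 4, combining Proposition \ref{link}(2) with Theorem \ref{2}) is that the conjecture holds inside the subclass $\UU^{\star}_{\omega_{0}}$: such groups lie in $\HF$, so $\silp G<\infty$ gives $\pd_{\Z G}B(G,\Z)<\infty$, and Theorem \ref{2} then produces a finite-dimensional model for $\eg$. You are commendably honest in flagging your own two gaps, and both are genuine. First, Proposition \ref{link}(2) requires $G\in\HF$ as a hypothesis; it is not known that $\silp G<\infty$ forces this, and the paper itself exhibits groups outside $\HF$ (the first Grigorchuk group, Thompson's group $\bold{F}$), so without this step you obtain only a conditional statement. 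Second, your stratification over the poset of finite subgroups really does fail to close up once torsion is unbounded, for exactly the reason you give: there is no bound on the number of strata nor on the accumulating dimensions, and this is precisely why Theorem \ref{KMthm} and Theorem \ref{lueckthm} carry their bounded-torsion hypotheses.

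Where your route diverges from the paper's is in how the taming subclass is cut out and used. You propose to keep stratifying by finite subgroups and search for a condition bounding the fixed-point data. The paper never stratifies by finite subgroups at all: the class $\UU^{\star}_{\omega_{0}}$ is built by finitely many alternations of (i) fundamental groups of graphs of groups whose vertex groups $G_\lambda$ satisfy a uniform bound $\mathbf{B}$ on $\cd_{\FF}G_\lambda-\pd_{\Z G_\lambda}B(G_\lambda,\Z)$, and (ii) extensions by groups in $\BB$. Step (i) is controlled by a Mayer--Vietoris argument in Bredon cohomology (Lemma \ref{MV}, Corollary \ref{bound}), bounding $\cd_{\FF}$ of the fundamental group by one more than the supremum over the vertex groups; step (ii) is controlled by L\"uck's extension theorem, which is where a bound on the \emph{orders} (not merely the lengths) of the finite subgroups of the quotient is indispensable; and the finite-dimensional model then comes from the L\"uck--Meintrup Theorem \ref{lueckdim} converting finite $\cd_{\FF}$ into finite $\gd_{\FF}$. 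The uniform bound $\mathbf{B}$ is exactly the ``finiteness condition that tames the hierarchy'' you were looking for, and the examples of Leary and Nucinkis cited in the paper show the discrepancy it controls can be nonzero. If you want a statement over all of $\UU$, the paper shows (Theorem \ref{rhotheorem} and the remark opening Section 4) that it would suffice to produce a function $\rho_2$ with $\cd_{\FF}G\le\rho_2(\silp G)$ for every group of finite Bredon cohomological dimension --- which is again open.
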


Theorems \ref{KMthm} and \ref{lueckthm}  imply, that to verify any of the above conjectures it remains to consider groups with unbounded torsion. There are plenty of examples of groups with unbounded torsion satisfying the above conjecture. For example, every countable locally finite group acts on a tree with finite stabilisers. Furthermore, lamplighter groups  are covered by the work of Flores and the second author  on elementary amenable groups \cite{flores-05}. On the other hand, fix a prime $p$, and then  let $\{P_{i}\}_{i\in I}$  be an infinite family of finite $p$-groups of strictly increasing orders, then the group $G=\bigast_{i\in I}P_{i}$ has no bound  on the length of its finite subgroups and is non-amenable. Nonetheless, $G$ acts on a tree with finite stabilisers. In this note we introduce a class of  groups that contains all of the above,  and discuss the validity of Kropholler and Mislin's conjecture as well as those by Talelli and the second author within this class.

\begin{definition} We define  $\mathcal{U}$ to be be the smallest class of groups containing all groups satisfying condition $B(d)$ for some integer $d\geq 0$ and having a bound on the orders of their finite subgroups, which is closed under taking extensions and fundamental groups of graphs of groups. 
\end{definition}

The class $\mathcal U$ contains all groups of finite virtual cohomological dimension, Gromov hyperbolic groups, Burnside groups of large odd exponent, and  more generally,  all groups of finite Bredon cohomological dimension with a bound on the orders of their finite subgroups. Furthermore it contains  all countable locally finite groups, lamplighter groups, Houghton's groups \cite{houghton},  all countable elementary amenable groups, countable free products of finite groups and  Dunwoody's inaccessible group \cite{inacc}. 

We begin by showing  that the class $\mathcal{U}$ admits a natural hierarchical decomposition over the ordinals and establish  some of its basic properties.  We prove that Kropholler and Mislin's conjecture as well as those of Talelli and  Nucinkis  hold within a subclass $\UU^{\star}_{\omega_{0}}$ of $\mathcal{U}$ containing all the above examples.

\begin{thmi} Every  ${\scriptstyle \mathbf H}_{1}\FF$-group contained in the class $\UU^{\star}_{\omega_{0}}$ admits a finite-dimensional classifying space for proper actions.
\end{thmi}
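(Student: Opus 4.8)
The strategy is to feed the \emph{algebraic} finiteness forced by $G\in{\scriptstyle \mathbf H}_{1}\FF$ into an induction along the \emph{hierarchical} decomposition of $\UU^{\star}_{\omega_{0}}$; the latter enters only to replace the ``bound on the orders of finite subgroups'' hypothesis of Theorems \ref{KMthm} and \ref{lueckthm} by something that survives the closure operations defining $\UU$. By \cite{luck-89} it suffices to show $\cd_{\FF}G<\infty$, equivalently that $G$ has a finite-dimensional model for $\eg$. The membership $G\in{\scriptstyle \mathbf H}_{1}\FF$ gives, via the splitting of the augmented cellular chain complex \cite{bouc,kropwall}, that $\F\cd G<\infty$; since ${\scriptstyle \mathbf H}_{1}\FF\subseteq\HF$ and $\F\cd$ does not increase on subgroups, Proposition \ref{link}(1) then shows that \emph{every} subgroup of $G$ satisfies condition $B(d)$ with the single constant $d:=\F\cd G$ (alternatively one routes this through $\silp$ and $\pd_{\Z G}B(G,\Z)$ using Theorem \ref{fcd-vs-silp} and Propositions \ref{link}(2), \ref{B(d)}(3)). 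If $G$ had bounded torsion this would already finish the proof by Theorem \ref{KMthm}; the content of the statement is to dispense with that assumption.

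The second step is a normal-form observation: every $G\in\UU^{\star}_{\omega_{0}}$ admits an iterated graph-of-groups decomposition — a well-founded tree of fundamental-group-of-graph-of-groups constructions — whose leaf (vertex) groups have a bound on the orders of their finite subgroups. One obtains this by pushing extensions inside graphs of groups: if $1\to N\to H\to Q\to1$ and $Q=\pi_{1}(\mathcal{Q},\Lambda)$, then pulling the Bass--Serre tree of $Q$ back along $H\to Q$ exhibits $H$ as the fundamental group of a graph of groups whose vertex and edge groups are the preimages of those of $Q$, each an extension of $N$ by a piece of $Q$; and, composing nested extensions, an extension of one bounded-torsion group by another is again bounded-torsion and may be absorbed into a leaf. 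The $\star$-condition and the bound $\omega_{0}$ on the height are exactly what keeps this reorganisation well-founded and inside the class. Every group occurring in the resulting tree is a subgroup of $G$, hence lies in ${\scriptstyle \mathbf H}_{1}\FF$ and, by the previous paragraph, satisfies $B(d)$ with the uniform constant $d$.

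Now induct on the nesting depth (an ordinal $\le\omega_{0}$) of this decomposition, proving that each node $H$ has $\gd_{\FF}H<\infty$. At a leaf, $H$ satisfies $B(d)$ and has a bound on the orders of its finite subgroups, so Theorem \ref{lueckthm} gives a finite-dimensional $E_{\FF}H$. At an interior node $H=\pi_{1}(\mathcal{H},\Delta)$, the vertex (and edge) groups are at smaller depth, are subgroups of $G$, and therefore have finite Bredon geometric dimension by the inductive hypothesis; the $\star$-condition forces a common bound on the orders of their finite subgroups (or leaves only finitely many of them), so Theorem \ref{lueckthm} with the uniform constant $d$ gives $D:=\sup_{v}\gd_{\FF}H_{v}<\infty$. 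Choosing models for the $E_{\FF}H_{v}$ of dimension $\le D$ and equivariantly blowing up the Bass--Serre tree $T$ of $(\mathcal{H},\Delta)$ — replacing each vertex $gH_{v}$ by a copy of such a model and each edge by the corresponding mapping cylinder — produces a $(D+1)$-dimensional $H$-$\CW$-complex $Z$ with finite stabilisers; for a finite subgroup $K\le H$ the fixed-point set $Z^{K}$ is a tree of contractible spaces modelled on the subtree $T^{K}\subseteq T$, with vertex and edge pieces the contractible sets $(gE_{\FF}H_{v})^{K}$ and $(gE_{\FF}H_{e})^{K}$, hence is contractible and non-empty, so $Z$ is a model for $E_{\FF}H$. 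Applying this at the root $H=G$ finishes the induction.

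The principal obstacle is the uniform control of dimension. Both the uniform boundedness of the Bredon geometric dimensions of the vertex groups appearing in the blow-up and the well-foundedness, within the class, of the normal-form reorganisation rest on the precise shape of the $\star$-condition, and neither is available for $\UU$ or for an arbitrary ${\scriptstyle \mathbf H}_{1}\FF$-group: an infinite free product $\bigast_{n}G_{n}$ with $\gd_{\FF}G_{n}=n$ has $\gd_{\FF}=\infty$ and is not known to lie in ${\scriptstyle \mathbf H}_{1}\FF$. Isolating exactly the finiteness that survives $\omega_{0}$ iterations of extensions and graphs of groups — so that the reorganisation and the uniform bounds hold simultaneously — is the heart of the proof.
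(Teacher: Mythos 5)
Your first paragraph is essentially the paper's opening move (membership in ${\scriptstyle \mathbf H}_{1}\FF$ gives $\F\cd G<\infty$, hence $\pd_{\Z G}B(G,\Z)\le d$, and this is inherited with the same constant by all subgroups via \cite{CK}), and your blow-up of the Bass--Serre tree is a geometric stand-in for the paper's Mayer--Vietoris estimate (Corollary \ref{bound}). But your ``normal-form'' reorganisation in the second step does not work, and it is load-bearing. In the definition $\UU^{\star}_{\alpha}=({\scriptstyle \mathbf {F_{B}}}\UU^{\star}_{\alpha-1})\BB$ the graph-of-groups piece is the \emph{kernel} and the $\BB$-piece is the \emph{quotient}: $N\toi G\tos Q$ with $N$ a fundamental group of a graph of groups and $Q\in\BB$. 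Pulling a Bass--Serre tree back along $G\to Q$ requires the \emph{quotient} to split over a tree (this is exactly the inclusion $\X({\scriptstyle \mathbf {F_{1}}}\Y)\subseteq{\scriptstyle \mathbf {F_{1}}}(\X\Y)$ used in Lemma \ref{equiv}), and $Q\in\BB$ need not act on any tree --- it may have property $\FA$, as with Pride's groups. The action of $N$ on its own tree does not extend to $G$. So the extension step cannot be absorbed into an iterated graph of groups with bounded-torsion leaves; it has to be confronted directly. The paper does this with L\"uck's extension theorem \cite[Theorem 3.1]{lueck-00}, which gives $\cd_{\FF}G\leq t\cdot\cd_{\FF}N+\cd_{\FF}Q$ where $t$ bounds the \emph{orders} of the finite subgroups of $Q$ --- this is precisely where the hypothesis distinguishing $\BB$ from the ``bounded length'' class $\BB'$ is consumed (cf.\ Remark \ref{bemerkung}).

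The second gap is in how you obtain the uniform bound $D$ on the dimensions of the vertex groups at an interior node. You invoke Theorem \ref{lueckthm} and assert that the $\star$-condition ``forces a common bound on the orders of their finite subgroups''; it does neither. The $\star$-condition bounds the \emph{difference} $\cd_{\FF}G_\lambda-\pd_{\Z G_\lambda}B(G_\lambda,\Z)$ by a constant $\mathbf{B}$ and says nothing about torsion, and the vertex groups at interior nodes are exactly the groups with unbounded torsion that the class is designed to capture, so Theorem \ref{lueckthm} does not apply to them. The paper's mechanism is purely algebraic: each vertex group $N_\lambda$ is a subgroup of $G$, so $\pd_{\Z N_\lambda}B(N_\lambda,\Z)\leq n:=\pd_{\Z G}B(G,\Z)$ by \cite{CK}, whence $\cd_{\FF}N_\lambda\leq n+\mathbf{B}$ by the $\star$-condition; Corollary \ref{bound} then gives $\cd_{\FF}N\leq n+\mathbf{B}+1$, the extension theorem gives $\cd_{\FF}G\leq t(n+\mathbf{B}+1)+\cd_{\FF}Q<\infty$, and Theorem \ref{lueckdim} converts finite $\cd_{\FF}$ into a finite-dimensional model for $\eg$. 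Your closing paragraph correctly identifies uniform dimension control as the crux, but the proposal does not supply it; the $\star$-condition resolves it only when threaded through $\pd_{\Z}B(-,\Z)$ in this way.
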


\subsection*{Acknowledgements} The authors thank the referee for their helpful comments and suggestions, and for pointing out   an improvement to our  bound in the Lemma in Remark \ref{bemerkung}.

\section{The class $\UU$ and its hierarchy}

Given a class of groups  $\mathfrak{X}$, define  ${\scriptstyle \mathbf {F_{1}}} \mathfrak{X}$ to be the class consisting of those groups, which are isomorphic to a fundamental group of a graph of groups in $\mathfrak{X}$. Let $\mathfrak{I}$ be the class consisting of the trivial group, then  ${\scriptstyle \mathbf {F_{1}}} \mathfrak{I}$ is the class of free groups.  Furthermore,  if $\mathfrak{X}\subseteq \mathfrak{Y}$ then $\mathfrak{X}\subseteq {\scriptstyle \mathbf {F_{1}}}\mathfrak{X} \subseteq {\scriptstyle \mathbf {F_{1}}}\mathfrak{Y}$.

Let $\X$ and $\Y$ be classes of groups. We denote by $\X\Y$ the class of groups, which are extensions of groups in $\X$ by groups in $\Y$.

\begin{definition} Let $\BB$ denote the class of groups satisfying condition $B(d)$ for some integer $d \geq 0$,  which have a bound on the orders of their finite subgroups. We define:
 
 \begin{itemize}
\item  $\UU_{0}=\mathfrak{J}$
\item
$\UU_{\alpha}=({\scriptstyle \mathbf {F_{1}}}\UU_{\alpha -1})\BB$ if $\alpha$ is a  successor  ordinal, 
\item 
$\UU_{\alpha}=\bigcup_{\beta<\alpha}\UU_{\beta}$ if $\alpha$ is a  limit  ordinal. 
\end{itemize}
The class  $\UU$ is defined as $\UU = \bigcup_{\alpha\geq 0}\UU_{\alpha}$.\
\end{definition}

\begin{rem}\label{U1rem}  ${\mathfrak U}_1=( \mathbf {F_{1}}\mathfrak{I}){\mathfrak B}={\mathfrak B}.$

This follows from the fact that any finite extension of a free group admits a $1$-dimensional model for $\eg$. Now apply \cite[Theorem 3.1]{lueck-00}
to give a finite-dimensional model for $\eg$ for any group in ${\mathfrak U}_1$, implying that it satisfies $B(d')$ for some $d'$. Since $G$ is an extension of a torsion-free group by a group with bounded torsion,  it has a bound on the orders of its finite subgroups. 
\end{rem}

\begin{LM}\label{equiv} The class $\UU$ coincides with the class $\mathcal{U}$.

\begin{proof}
Clearly $\UU \subseteq \mathcal{U}$ and $\UU$ is closed under taking fundamental groups of graphs of groups. In order to show $\mathcal{U} \subseteq \UU$ we need only to verify that the class $\UU$ is extension closed. By Bass-Serre theory it follows that  if $G/N$ acts  on a tree $T$, then $G$ has an action on $T$ such that  $N$ acts trivially. 
Hence,  if $\X$ and $\Y$ are two classes of groups then $\X({\scriptstyle \mathbf {F_{1}}}\Y)\subseteq {\scriptstyle \mathbf {F_{1}}}({\X\Y})$.
 We argue by induction on $\beta$ to show $\UU_{\alpha}\UU_{\beta} \subseteq\UU_{\alpha+\beta}$. If $\beta =1$ then by Lemma \ref{U1rem} $\UU_{\alpha}\UU_{1}=\UU_{\alpha}\BB\subseteq ({\scriptstyle \mathbf {F_{1}}}\UU_{\alpha})\BB=\UU_{\alpha+1}$.
\begin{itemize}
\item Suppose $\beta$ is a successor ordinal, $\beta=\gamma+1$.
\begin{align*} \UU_{\alpha}\UU_{\beta} & =\UU_{\alpha}(({\scriptstyle \mathbf {F_{1}}}\UU_{\gamma})\BB)&& \\
&\subseteq (\UU_{\alpha}({\scriptstyle \mathbf {F_{1}}}\UU_{\gamma}))\BB &&  \text{(by universality, \cite[pg. 2]{fin1})} \\ & \subseteq ({\scriptstyle \mathbf {F_{1}}}(\UU_{\alpha}\UU_{\gamma}))\BB && \text{(by the above)}\\
&  \subseteq ({\scriptstyle \mathbf {F_{1}}}(\UU_{\alpha+\gamma}))\BB && \text{(by induction)}\\
&= \UU_{\alpha+\beta}.&&
 \end{align*}

\item Suppose $\beta$ be a limit ordinal, then 
$\UU_{\beta}=\bigcup_{\gamma < \beta}\UU_{\gamma}$.
\begin{align*} 
\UU_{\alpha}\UU_{\beta}&=\UU_{\alpha}(\bigcup_{\gamma < \beta}\UU_{\gamma})&& \\
&=\bigcup_{\gamma < \beta}\UU_{\alpha}\UU_{\gamma}&& \\
& \subseteq \bigcup_{\gamma < \beta}\UU_{\alpha+\gamma} && \text{(by induction)}\\
& = \UU_{\alpha+\beta} . &&
\end{align*}
\end{itemize}
\end{proof}
\end{LM}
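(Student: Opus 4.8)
The plan is to prove the two inclusions separately. For $\UU \subseteq \mathcal{U}$ I would argue by transfinite induction on $\alpha$ that $\UU_{\alpha} \subseteq \mathcal{U}$: the case $\alpha = 0$ is trivial, a limit ordinal is immediate since $\UU_{\alpha}$ is then a union of smaller $\UU_{\beta}$, and for a successor $\alpha$ every group in $\UU_{\alpha} = (\mathbf{F_1}\UU_{\alpha-1})\BB$ is an extension of a group in $\BB \subseteq \mathcal{U}$ by a group in $\mathbf{F_1}\UU_{\alpha-1}$; the latter lies in $\mathcal{U}$ because $\mathcal{U}$ is closed under fundamental groups of graphs of groups and, by the inductive hypothesis, contains $\UU_{\alpha-1}$, and then extension-closure of $\mathcal{U}$ finishes the step.

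For $\mathcal{U} \subseteq \UU$, since $\mathcal{U}$ is by definition the smallest class containing $\BB$ and closed under extensions and under fundamental groups of graphs of groups, it suffices to verify that $\UU$ enjoys all three properties. It contains $\BB$ since $\BB = \UU_{1}$ by Remark~\ref{U1rem}. It is closed under fundamental groups of graphs of groups: given such a group with all vertex and edge groups in $\UU$, each of these groups lies in some $\UU_{\alpha_{i}}$, so setting $\alpha = \sup_{i}\alpha_{i}$ all vertex and edge groups lie in $\UU_{\alpha}$ and the fundamental group lies in $\mathbf{F_1}\UU_{\alpha} \subseteq (\mathbf{F_1}\UU_{\alpha})\BB = \UU_{\alpha+1} \subseteq \UU$. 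The remaining, and main, point is that $\UU$ is extension-closed, for which it is enough to prove
$$\UU_{\alpha}\,\UU_{\beta} \subseteq \UU_{\alpha+\beta}$$
for all ordinals $\alpha, \beta$, and I would establish this by transfinite induction on $\beta$ for fixed $\alpha$.

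The base steps $\beta = 0$ and $\beta = 1$ are easy, since $\UU_{\alpha}\UU_{0} = \UU_{\alpha}$ and $\UU_{\alpha}\UU_{1} = \UU_{\alpha}\BB \subseteq (\mathbf{F_1}\UU_{\alpha})\BB = \UU_{\alpha+1}$ using $\UU_{\alpha} \subseteq \mathbf{F_1}\UU_{\alpha}$. For a limit $\beta$, writing $\UU_{\beta} = \bigcup_{\gamma<\beta}\UU_{\gamma}$ and noting that an extension of a group in this union by a group in $\UU_{\alpha}$ is already an extension of a group in some $\UU_{\gamma}$, one gets $\UU_{\alpha}\UU_{\beta} = \bigcup_{\gamma<\beta}\UU_{\alpha}\UU_{\gamma} \subseteq \bigcup_{\gamma<\beta}\UU_{\alpha+\gamma} = \UU_{\alpha+\beta}$ by induction. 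For a successor $\beta = \gamma+1$ I would combine two ingredients: first, the universality (associativity) of class-extensions, $\X(\Y\mathfrak{Z}) \subseteq (\X\Y)\mathfrak{Z}$, applied with $\mathfrak{Z} = \BB$ to rewrite $\UU_{\alpha}((\mathbf{F_1}\UU_{\gamma})\BB) \subseteq (\UU_{\alpha}(\mathbf{F_1}\UU_{\gamma}))\BB$; and second, the Bass–Serre fact that if $G/N$ acts on a tree then $G$ acts on the same tree with $N$ acting trivially, which gives $\X(\mathbf{F_1}\Y) \subseteq \mathbf{F_1}(\X\Y)$ for any classes $\X, \Y$, since pulling back a graph-of-groups splitting of $G/N$ with groups in $\Y$ to $G$ produces vertex and edge stabilisers that are extensions of groups in $\Y$ by $N \in \X$. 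Applying the second fact with $\X = \UU_{\alpha}$, $\Y = \UU_{\gamma}$ and then the inductive hypothesis yields $\UU_{\alpha}(\mathbf{F_1}\UU_{\gamma}) \subseteq \mathbf{F_1}(\UU_{\alpha}\UU_{\gamma}) \subseteq \mathbf{F_1}\UU_{\alpha+\gamma}$, whence $\UU_{\alpha}\UU_{\beta} \subseteq (\mathbf{F_1}\UU_{\alpha+\gamma})\BB = \UU_{\alpha+\gamma+1} = \UU_{\alpha+\beta}$. The only genuinely non-formal ingredient, and hence the main obstacle, is this last Bass–Serre step — checking that a graph-of-groups decomposition of a quotient $G/N$ lifts to one of $G$ with vertex and edge groups replaced by their preimages — everything else being ordinal bookkeeping and elementary closure properties of class-extensions.
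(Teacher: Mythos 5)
Your proposal is correct and follows essentially the same route as the paper: the same transfinite induction on $\beta$ to show $\UU_{\alpha}\UU_{\beta}\subseteq\UU_{\alpha+\beta}$, resting on the same two ingredients, namely associativity of class extensions and the Bass--Serre observation that a tree action of $G/N$ lifts to $G$, giving $\X({\scriptstyle \mathbf {F_{1}}}\Y)\subseteq {\scriptstyle \mathbf {F_{1}}}(\X\Y)$. You merely spell out a few steps the paper leaves implicit (the inclusion $\UU\subseteq\mathcal{U}$ and closure of $\UU$ under fundamental groups of graphs of groups via a supremum of ordinals), which is fine.
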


\begin{prop}\label{closu} The class $\UU$ is closed under taking free products with amalgamation, $\HNN$-extensions, countable directed unions, extensions and subgroups. 
\begin{proof} 
It is obvious that $\UU$ is closed under taking free products with amalgamation and HNN-extensions. If $G$ is a countable directed union of groups in $\UU$  then $G$ acts on a tree with stabilisers conjugate to groups in the directed system \cite[Lemma 3.2.3]{hierakro} and so $G\in \UU$.
In Lemma \ref{equiv} it is shown that the class $\UU$ is closed under taking extensions.\\
   For any class of groups $\mathfrak{X}$ we write $G\in {\scriptstyle \mathbf S}\mathfrak{X}$ if $G\leq K\in \mathfrak{X}$. Note that ${\scriptstyle \mathbf {SF_{1}}}\mathfrak{X}\subseteq  {\scriptstyle \mathbf {F_{1}S}} \mathfrak{X}$; in fact if $G\in {\scriptstyle \mathbf {SF_{1}}} \mathfrak{X}$, $G$ is a subgroup of a group $K$ that acts on a tree $T$ with stabilisers in $\mathfrak{X}$ and so $T$ is a $G$-tree with stabilisers that are subgroups of the stabilisers of the $K$-tree $T$. 
Clearly if a class $\mathfrak{X}$ is subgroup closed then ${\scriptstyle \mathbf {SF_{1}}}\mathfrak{X}\subseteq   {\scriptstyle \mathbf {F_{1}}} \mathfrak{X}$ and note that the class $\BB$ is subgroup closed. 
A transfinite induction as in Lemma \ref{equiv} yields that ${\scriptstyle \mathbf {S}}\UU \subseteq\UU$. The main observation here is the following: Let $G \in \UU_\alpha$; then any subgroup of $G$ is an extension of a subgroup of a group in ${\scriptstyle \mathbf {F_{1}}}\UU_{\alpha -1}$ by a subgroup of a group in $\BB$.
\end{proof}
\end{prop}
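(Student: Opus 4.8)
The plan is to dispatch four of the five closure properties quickly and then to concentrate on subgroup closure, which is the only one requiring real work. Free products with amalgamation and $\HNN$-extensions of groups in $\UU$ are fundamental groups of graphs of groups on a single segment, respectively a single loop, with all vertex and edge groups in $\UU$; hence they lie in ${\scriptstyle \mathbf {F_{1}}}\UU$, and closure of $\UU$ under ${\scriptstyle \mathbf {F_{1}}}$ was already recorded in Lemma \ref{equiv}. A countable directed union $G=\bigcup_i G_i$ of groups $G_i\in\UU$ acts on a tree with stabilisers conjugate to members of the system by \cite[Lemma 3.2.3]{hierakro}; since $\UU$ is isomorphism closed, $G\in{\scriptstyle \mathbf {F_{1}}}\UU\subseteq\UU$. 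Finally, extension closure is precisely the inequality $\UU_{\alpha}\UU_{\beta}\subseteq\UU_{\alpha+\beta}$ already established in Lemma \ref{equiv}.

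For subgroup closure I would isolate two auxiliary facts. First, a Bass--Serre argument gives ${\scriptstyle \mathbf {SF_{1}}}\X\subseteq{\scriptstyle \mathbf {F_{1}S}}\X$ for any class $\X$: if $H\leq K$ and $K$ acts on a tree $T$ with stabilisers in $\X$, then $H$ acts on the same tree $T$ with stabilisers $H\cap K_v\leq K_v$, so $H$ is a fundamental group of a graph of groups with stabilisers in ${\scriptstyle \mathbf S}\X$. In particular, if $\X$ is subgroup closed then ${\scriptstyle \mathbf {SF_{1}}}\X\subseteq{\scriptstyle \mathbf {F_{1}}}\X$. Second, the base class $\BB$ is subgroup closed: a bound on the orders of the finite subgroups is inherited by every subgroup, and for $H\leq G$ and a finite $K\leq H$ the Weyl group $N_H(K)/K$ embeds into $N_G(K)/K$, so the condition $B(d)$ descends once one knows that $b(d)$ passes to subgroups. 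This is the one point I expect to be delicate, so I would pin down a precise reference or a short argument for it.

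With these in hand, the heart of the proof is the transfinite claim ${\scriptstyle \mathbf S}\UU_{\alpha}\subseteq\UU_{\alpha}$, proved by induction on $\alpha$; passing to the union over $\alpha$ then yields ${\scriptstyle \mathbf S}\UU\subseteq\UU$. The base case $\alpha=0$ is trivial, since $\UU_0$ consists of the trivial group, and limit ordinals are immediate from $\UU_{\alpha}=\bigcup_{\beta<\alpha}\UU_{\beta}$. For a successor $\alpha=\gamma+1$, write $G\in\UU_{\alpha}=({\scriptstyle \mathbf {F_{1}}}\UU_{\gamma})\BB$, so there is a normal subgroup $N\trianglelefteq G$ with $N\in{\scriptstyle \mathbf {F_{1}}}\UU_{\gamma}$ and $G/N\in\BB$. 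Given $H\leq G$, the subgroup $H\cap N$ is normal in $H$ with quotient $H/(H\cap N)\cong HN/N\leq G/N$, which lies in $\BB$ by subgroup closure; moreover $H\cap N\in{\scriptstyle \mathbf {SF_{1}}}\UU_{\gamma}\subseteq{\scriptstyle \mathbf {F_{1}S}}\UU_{\gamma}\subseteq{\scriptstyle \mathbf {F_{1}}}\UU_{\gamma}$, the last inclusion using monotonicity of ${\scriptstyle \mathbf {F_{1}}}$ together with the inductive hypothesis ${\scriptstyle \mathbf S}\UU_{\gamma}\subseteq\UU_{\gamma}$. Thus $H$ is an extension of a group in ${\scriptstyle \mathbf {F_{1}}}\UU_{\gamma}$ by a group in $\BB$, that is $H\in\UU_{\alpha}$, completing the induction. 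This is exactly the ``main observation'' one needs: any subgroup of $G\in\UU_{\alpha}$ is an extension of a subgroup of a group in ${\scriptstyle \mathbf {F_{1}}}\UU_{\gamma}$ by a subgroup of a group in $\BB$.

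The main obstacle is therefore not the bookkeeping of the induction, which runs cleanly precisely because the defining shape $({\scriptstyle \mathbf {F_{1}}}\UU_{\gamma})\BB$ of $\UU_{\alpha}$ is reproduced on passing to a subgroup via the pair $H\cap N$ and $H/(H\cap N)$; rather it is the two structural inputs on which the induction rests, namely the commutation ${\scriptstyle \mathbf {SF_{1}}}\X\subseteq{\scriptstyle \mathbf {F_{1}S}}\X$ coming from Bass--Serre theory, and the subgroup closure of the base class $\BB$, whose $B(d)$ component is the genuinely nontrivial ingredient that I would treat with care.
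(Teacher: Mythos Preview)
Your proof is correct and follows essentially the same approach as the paper's: the first four closures are dispatched identically, and for subgroup closure both arguments rest on the Bass--Serre observation ${\scriptstyle \mathbf{SF_1}}\X\subseteq{\scriptstyle \mathbf{F_1S}}\X$, subgroup closure of $\BB$, and the transfinite induction ${\scriptstyle \mathbf S}\UU_\alpha\subseteq\UU_\alpha$ via the extension $H\cap N\trianglelefteq H$ with quotient embedding in $G/N\in\BB$. Your write-up is in fact more detailed than the paper's, which simply asserts that $\BB$ is subgroup closed and that the induction runs ``as in Lemma~\ref{equiv}''; your caution about the $B(d)$ component of $\BB$ being the nontrivial ingredient is well placed, though the argument you sketch (Weyl groups embed, and $b(d)$ descends to subgroups via induction/Mackey) goes through.
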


For a class of groups $\mathfrak{X}$, let ${\scriptstyle \mathbf F}\mathfrak{X}$ be the smallest class of groups containing the class $\mathfrak{X}$, and which contains a group $G$ whenever $G$ can be realised as the fundamental group of a graph of groups already in ${\scriptstyle \mathbf F}\mathfrak{X}$. The class ${\scriptstyle \mathbf F}\FF$ was considered by Richard J. Platten in his PhD thesis. Note that  the classes ${\scriptstyle \mathbf F}\BB$ and ${\scriptstyle \mathbf F}\FF$ differ. Obviously, any  non-trivial group of finite cohomological dimension  with Serre's property $\FA$ does not belong to  ${\scriptstyle \mathbf F}\FF$ but it lies in $\BB$;  examples of such groups appear in \cite{pride}.

\begin{LM} $\UU \subseteq {\scriptstyle \mathbf H}\FF$, ${\scriptstyle \mathbf F}\BB\subsetneq \UU$ and $\UU$ is not closed under taking quotients.

\begin{proof}
$\BB \subset {\scriptstyle \mathbf H}_{1}\FF$.  Any group acting on a tree with stabilisers in ${\scriptstyle \mathbf H}\FF$ obviously lies in ${\scriptstyle \mathbf H}\FF,$   and ${\scriptstyle \mathbf H}\FF$ is extension closed by  \cite[2.3]{MR1246274}. In particular  $\UU \subseteq {\scriptstyle \mathbf H}\FF$.  

Let $H$ be a non-trivial finite group and let $P$ be Pride's group of cohomological dimension equal to two with Serre's property $\FA$ \cite{pride}.  Clearly the group $G=H\wr P$ has no bound on the lengths of its finite subgroups and it lies in $\UU_{2}\backslash\UU_{1}$. By \cite{coukar} the group $G$ has Serre's property $\FA$,  it does not lie in $\BB$ and so $G\notin {\scriptstyle \mathbf F}\BB$. Note that $\cd_{\mathbb{Q}}G\leq 3$. We recall that the rational cohomological dimension of a group $G$,  denoted by $\cd_{\mathbb{Q}}G$, is defined as  the projective dimension over $\mathbb{Q}G$ of the trivial module $\mathbb{Q}$.

Since the class of free groups $\bold{Fr}$ is contained in $\BB$ and $\UU \subseteq {\scriptstyle \mathbf {H}}\FF$  it  is enough to give an example of a finitely generated group that does not belong to ${\scriptstyle \mathbf {H}}\FF$. The first  Grigorchuk group  is a $3$-generator group but it  is does not lie in ${\scriptstyle \mathbf H}\FF$ \cite{gg}. Another classical example of such a group is given by the Thompson group $\bold{F}$ \cite{brge, MR1246274}.
\end{proof}
\end{LM}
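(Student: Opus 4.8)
The statement bundles three claims, which I would treat separately, leaning throughout on the closure properties of $\UU$ and of $\HF$ recorded above. For the inclusion $\UU \subseteq \HF$ the plan is a transfinite induction along the hierarchy $\{\UU_{\alpha}\}$, using two facts about $\HF$: it is extension-closed by \cite[2.3]{MR1246274}, and it absorbs fundamental groups of graphs of groups whose vertex groups lie in $\HF$, since the associated Bass--Serre tree is a finite-dimensional contractible complex and the defining clause of $\HF$ then applies, giving ${\scriptstyle \mathbf {F_{1}}}\HF \subseteq \HF$. The base case $\UU_{0}=\mathfrak{J}\subseteq\FF\subseteq\HF$ and the limit stages (unions) are immediate. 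At a successor stage a group $G \in \UU_{\alpha}=({\scriptstyle \mathbf {F_{1}}}\UU_{\alpha-1})\BB$ is an extension of a group in ${\scriptstyle \mathbf {F_{1}}}\UU_{\alpha-1}$ by a group in $\BB$; the inductive hypothesis $\UU_{\alpha-1}\subseteq\HF$ together with graph-of-groups absorption places the kernel in $\HF$, while $\BB \subseteq {\scriptstyle \mathbf H}_{1}\FF \subseteq \HF$ (groups in $\BB$ satisfy $B(d)$ and have bounded torsion, hence admit a finite-dimensional model for $\eg$ by Theorem \ref{KMthm}), and extension-closure completes the step.

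For ${\scriptstyle \mathbf F}\BB \subsetneq \UU$ the inclusion is immediate: $\BB=\UU_{1}\subseteq\UU$, the class $\UU$ is closed under fundamental groups of graphs of groups by Proposition \ref{closu}, and ${\scriptstyle \mathbf F}\BB$ is by definition the smallest such class over $\BB$. For strictness I would exhibit $G=H\wr P$, with $H$ a non-trivial finite group and $P$ Pride's torsion-free group of cohomological dimension two carrying Serre's property $\FA$ \cite{pride}. Membership $G\in\UU$ follows by realising the wreath product as an extension: the base group $H^{(P)}$ is countable locally finite, hence acts on a tree with finite stabilisers and lies in ${\scriptstyle \mathbf {F_{1}}}\FF\subseteq{\scriptstyle \mathbf {F_{1}}}\BB$, whereas $P$ is torsion-free of finite $\cd$ and lies in $\BB$, so $G\in({\scriptstyle \mathbf {F_{1}}}\BB)\BB=\UU_{2}$. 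Since $H\neq 1$ and $P$ is infinite, $H^{(P)}$, and hence $G$, has finite subgroups of unbounded order, so $G\notin\BB$.

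The heart of the matter, and the step I expect to be hardest, is showing $G\notin{\scriptstyle \mathbf F}\BB$. I would isolate the lemma that \emph{a group in ${\scriptstyle \mathbf F}\BB$ with property $\FA$ already lies in $\BB$}, proved by transfinite induction on the stage at which a group first enters ${\scriptstyle \mathbf F}\BB$: such a group, if it is the fundamental group of a non-trivial graph of groups, acts on the Bass--Serre tree without a global fixed point, contradicting $\FA$; hence it must fix a vertex, so it coincides with a vertex group occurring at an earlier stage, and one descends, limit stages being unions. Granting the lemma, it remains to note that $G=H\wr P$ has property $\FA$ by \cite{coukar}, as both factors do; being outside $\BB$, it therefore cannot lie in ${\scriptstyle \mathbf F}\BB$, which gives the strict inclusion. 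The delicate points are the bookkeeping of the ${\scriptstyle \mathbf F}$-hierarchy at limit ordinals and the correct invocation of the wreath-product criterion for $\FA$.

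Finally, that $\UU$ is not quotient-closed is the quickest part. The free groups $\bold{Fr}$ lie in $\BB\subseteq\UU$, so every finitely generated group is a quotient of a group in $\UU$; since $\UU\subseteq\HF$ by the first part, it suffices to name one finitely generated group outside $\HF$. The first Grigorchuk group is $3$-generated yet does not lie in $\HF$ \cite{gg}, so it is a quotient of the free group $F_{3}\in\UU$ that fails to lie in $\UU$, proving non-closure; Thompson's group $\bold{F}$ would serve equally well \cite{brge, MR1246274}.
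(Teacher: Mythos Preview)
Your argument is correct and follows essentially the same route as the paper's, only spelling out more explicitly the transfinite induction for $\UU\subseteq\HF$ and the lemma ``property $\FA$ plus ${\scriptstyle\mathbf F}\BB$ forces $\BB$'' that the paper leaves implicit in the clause ``it does not lie in $\BB$ and so $G\notin{\scriptstyle\mathbf F}\BB$''. One small correction: to get $\BB\subseteq{\scriptstyle\mathbf H}_1\FF$ you should invoke Theorem~\ref{lueckthm} (L\"uck) rather than Theorem~\ref{KMthm}, since the latter already assumes $G\in\HF$ and would make the step circular.
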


 \section{Groups in $\mathfrak{U}$ satisfying $\pd_{\Z G} B(G,\Z) < \infty$}

 We start by recalling some basic facts about Bredon cohomology.  Let $G$ be a group and denote by ${\mathcal O}_{\FF}G$ the orbit category, that is the category having as objects transitive $G$-sets with finite stabilisers and as morphisms the $G$-maps. A Bredon-module is  a contravariant functor $M(-): {\mathcal O}_{\FF}G \to \mathfrak{Ab}.$    The category of Bredon-modules is an abelian category and has enough projectives.  In particular, projectives are direct summands of direct sums of Bredon-modules of the form $\Z[-, G/K]$, where $K \in \FF$ and $[G/H,G/K]$ denotes the set of all morphisms $G/H \to G/K$. Bredon cohomology groups $\coh_{\FF}^{n}(G, -)$ are now computed via projective resolutions of the constant Bredon module $\Z(-)=\underline{\Z}.$ For background the reader is referred to \cite{luck-89}, and for a survey to \cite{MR1851258}.

\begin{thm}\cite[Theorem 0.1]{lueckmeintrupp}\label{lueckdim}
Let $G$ be a group of finite Bredon cohomological dimension. Then $G$ admits a model for $\eg$ of dimension $max\{3, \cd_{\FF} G\}.$
\end{thm}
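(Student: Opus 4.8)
The plan is to prove the non-trivial inequality $\gd_{\FF}G \leq \max\{3,\cd_{\FF}G\}$ by realising a free Bredon resolution of the constant module $\underline{\Z}$ geometrically, via an equivariant analogue of the Eilenberg--Ganea construction. Write $m=\max\{3,\cd_{\FF}G\}$; the goal is to produce a model for $\eg$ of dimension $m$. First I would build a free resolution of $\underline{\Z}$ by modules that are direct sums of representables $\Z[-,G/K]$ with $K\in\FF$, which exist because the category of Bredon modules has enough projectives and such representables generate the free ones. Since $\cd_{\FF}G\leq m$, the $m$-th syzygy of this resolution is projective, and the standard stabilisation trick --- absorbing a complementary projective summand into the preceding free term --- upgrades it to a genuinely \emph{free} resolution of length exactly $m$:
\[
0 \to \underline{F}_m \to \underline{F}_{m-1} \to \cdots \to \underline{F}_0 \to \underline{\Z} \to 0 .
\]

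The core of the argument is to realise this as the cellular Bredon chain complex of a $G$-CW-complex, built skeleton by skeleton, where a summand $\Z[-,G/K]$ of $\underline{F}_j$ contributes an equivariant cell $G/K\times D^{j}$. The inductive invariant I would maintain is that the $j$-skeleton $X^{(j)}$ has $(X^{(j)})^{L}$ being $(j-1)$-connected for every finite subgroup $L$ --- and empty for infinite $L$, which is automatic since every cell stabiliser is finite. The induction step relies on the Bredon form of the equivariant Hurewicz theorem: once $(X^{(j-1)})^{L}$ is $(j-2)$-connected for all finite $L$, the homotopy Bredon module $\underline\pi_{j-1}(X^{(j-1)})$ is identified with $\underline{H}_{j-1}(X^{(j-1)})=\ker(\partial_{j-1}\colon\underline{F}_{j-1}\to\underline{F}_{j-2})$. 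One then chooses equivariant attaching maps for the $j$-cells so that the induced differential is exactly the prescribed $\partial_j\colon\underline{F}_j\to\underline{F}_{j-1}$, which kills $\underline\pi_{j-1}$ and raises the connectivity of all fixed point sets by one simultaneously.

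Carrying this out through dimension $m$, the decisive point is the top attachment. Since the resolution is free and terminates at $\underline{F}_m$, the differential $\partial_m$ is \emph{injective} with image precisely $\ker(\partial_{m-1})\cong\underline\pi_{m-1}(X^{(m-1)})$. Attaching the $m$-cells indexed by $\underline{F}_m$ via $\partial_m$ therefore surjects onto $\ker\partial_{m-1}$ and is itself injective, forcing $\underline\pi_{m-1}(X^{(m)})=0$ and $\underline\pi_{m}(X^{(m)})=\ker\partial_m=0$ at once. Hence every $(X^{(m)})^{L}$ is an $m$-dimensional complex with vanishing homotopy through degree $m$, so contractible, while $(X^{(m)})^{L}=\emptyset$ for infinite $L$; thus $X^{(m)}$ is a model for $\eg$ of dimension $m$, as required.

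I expect the main obstacle to be the equivariant realisation and the obstruction theory underpinning the induction step: one must control \emph{all} the fixed point sets $(X^{(j)})^{L}$ coherently across the orbit category, which needs both the Bredon form of the equivariant Hurewicz theorem (to recover $\underline\pi$ from $\underline{H}$) and the existence of equivariant attaching maps realising a prescribed differential between sums of representables. This is precisely where the hypothesis $m\geq 3$ enters, exactly as in the classical Eilenberg--Ganea theorem: the realisation of the $\pi_1$-data and the simple-connectivity of the fixed point sets cannot in general be achieved below dimension $3$, which is what forces the $\max\{3,-\}$ in the statement rather than a clean equality $\gd_{\FF}G=\cd_{\FF}G$.
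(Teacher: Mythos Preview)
The paper does not supply a proof of this theorem; it is quoted directly from L\"uck--Meintrup \cite{lueckmeintrupp} and used as a black box. Your outline is essentially the argument of that original reference: realise a length-$m$ free Bredon resolution of $\underline{\Z}$ as the cellular chain complex of a $G$-CW-complex built skeletonwise, using the equivariant Hurewicz theorem over the orbit category to attach cells, with the constraint $m\geq 3$ arising from the low-dimensional obstructions exactly as in the classical Eilenberg--Ganea situation. So there is nothing to compare on the paper's side, and your sketch is the correct approach.
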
 
 
There now follows a basic lemma in Bredon cohomology, which has also been proved in \cite[Corollary 4.7]{depeta} using spectral sequences.
Let $H$ be a subgroup of $G$. We then have a functor 
$$\begin{array}{lclc} I_H: & \mathcal{O}_{\FF,H} H &\to  & \mathcal{O}_{\FF, G} G \\
& H/K &\mapsto & G/K, \end{array}$$
where ${\FF,H}$ and ${\FF,G}$ denote the families of finite subgroups of $H$ and $G$ respectively.
 
  \begin{LM}\label{MV} Let $T$ be a $G$-tree with edge set $E=\bigsqcup_{i\in I} L_{i}\backslash G $ and vertex set $V=\bigsqcup_{j\in J} N_{j}\backslash G$. Then there is a Mayer-Vietoris sequence:
\begin{multline*}
\dots\to\coh_{\FF}^{n}(G, -)\to \bigoplus_{j\in J}\coh_{\FF}^{n}(N_{j}, \res_{I_{N_{j}}}-)\\ \to \bigoplus_{i\in I}\coh_{\FF}^{n}(L_{i}, \res_{I_{L_{j}}}-)\to \coh_{\FF}^{n+1}(G, -)\to \dots
\end{multline*}
 \begin{proof}
 By Corollary 3.4 in \cite{kropholler-08} the augmented Bredon cell complex 
 $$\Z[-,E]\toi \Z[-, V]\tos\underline{\Z}$$
  is a short exact sequence of $\mathcal{O}G$-modules.
 Now applying the long exact sequence in Bredon cohomology we obtain  
\begin{multline*}
\hspace{.02cm}\dots\to   \Ext_{\FF}^{n}(\underline{\Z}, -)\to \bigoplus_{j\in J}\Ext_{\FF}^{n}(\Z [-, N_{j}\backslash G], -)\\ \to \bigoplus_{i\in I}\Ext_{\FF}^{n}(\Z [-, L_{i}\backslash G], -)\to   
\Ext_{\FF}^{n+1}(\underline{\Z}, -)\to \dots
\end{multline*} 
We show that  $\Ext_{\FF, G}^{n}(\Z [-, H \backslash G], -)\cong \Ext^{n}_{\FF, H} (\underline{\Z},  -)$.\\
 \cite[Lemma 2.7]{symonds-05} implies that  $\Z [-, H \backslash G] \cong \indu_{I_{H}}  \underline{\Z}$. From the adjoint isomorphism it follows that induction along $I_{H}$ is  left adjoint to restriction along $I_{H}$:  $$ \mor_{\FF, G}( \indu_{I_{H}}  \underline{\Z}, -)\\ \cong   \mor_{\mathfrak{F}, H}(\underline{\Z}, \res_{I_{H}}-).$$ Hence the result  follows. 
 \end{proof}
   \end{LM}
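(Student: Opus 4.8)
The plan is to derive the sequence as the long exact $\Ext$-sequence attached to the augmented cellular chain complex of $T$, regarded as a complex of Bredon modules, after reinterpreting the $\Ext$-groups over the edge and vertex stabilisers as their own Bredon cohomology. First I would use that, since $T$ is a tree, for every finite subgroup $K\le G$ the fixed-point set $T^{K}$ is a non-empty subtree, hence contractible (after subdividing the edges of $T$, if necessary, so that $G$ acts without inversions). Evaluating the two-term complex $\Z[-,E]\to\Z[-,V]\to\underline{\Z}$ at $G/K$ recovers the augmented cellular chain complex $0\to C_{1}(T^{K})\to C_{0}(T^{K})\to\Z\to 0$ of $T^{K}$, which is exact; hence
$$\Z[-,E]\toi\Z[-,V]\tos\underline{\Z}$$
is a short exact sequence of $\mathcal{O}_{\FF}G$-modules, which is exactly Corollary 3.4 of \cite{kropholler-08}. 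Since $E=\bigsqcup_{i}L_{i}\backslash G$ and $V=\bigsqcup_{j}N_{j}\backslash G$ are disjoint unions of transitive $G$-sets, and a $G$-map out of a transitive $G$-set meets only one orbit, $\Z[-,E]=\bigoplus_{i}\Z[-,L_{i}\backslash G]$ and $\Z[-,V]=\bigoplus_{j}\Z[-,N_{j}\backslash G]$.

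Next I would apply $\Ext^{*}_{\FF}(-,M)$ to this short exact sequence and take the associated long exact sequence, obtaining
$$\cdots\to\Ext^{n}_{\FF}(\underline{\Z},M)\to\bigoplus_{j}\Ext^{n}_{\FF}(\Z[-,N_{j}\backslash G],M)\to\bigoplus_{i}\Ext^{n}_{\FF}(\Z[-,L_{i}\backslash G],M)\to\Ext^{n+1}_{\FF}(\underline{\Z},M)\to\cdots$$
(here $\Ext^{*}$ sends a direct sum in the first variable to a product, which I display as a $\bigoplus$ following the convention of the statement), with $\Ext^{n}_{\FF}(\underline{\Z},M)=\coh^{n}_{\FF}(G,M)$ by definition of Bredon cohomology.

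It then remains to establish, for each subgroup $H\le G$, a natural isomorphism $\Ext^{n}_{\FF,G}(\Z[-,H\backslash G],M)\cong\coh^{n}_{\FF}(H,\res_{I_{H}}M)$ of Eckmann--Shapiro type. I would first identify $\Z[-,H\backslash G]$ with the module induced from the constant module along the inclusion of orbit categories $I_{H}\colon\mathcal{O}_{\FF,H}H\to\mathcal{O}_{\FF,G}G$, namely $\Z[-,H\backslash G]\cong\indu_{I_{H}}\underline{\Z}$, which is \cite[Lemma 2.7]{symonds-05}. Since $\indu_{I_{H}}$ is left adjoint to the exact functor $\res_{I_{H}}$ it preserves projectives, and being induction along an inclusion of orbit categories it is exact; hence a projective resolution $P_{*}\to\underline{\Z}$ of $\mathcal{O}_{\FF,H}H$-modules is carried to a projective resolution $\indu_{I_{H}}P_{*}\to\Z[-,H\backslash G]$ of $\mathcal{O}_{\FF,G}G$-modules. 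Applying the adjunction isomorphism $\mor_{\FF,G}(\indu_{I_{H}}-,M)\cong\mor_{\FF,H}(-,\res_{I_{H}}M)$ degreewise and passing to cohomology then gives the desired isomorphism, and substituting $H=N_{j}$ and $H=L_{i}$ into the long exact sequence above yields the Mayer--Vietoris sequence.

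I expect this last step to be the main obstacle: the contractibility of $T^{K}$, the resulting two-term complex, and the long exact sequence are all formal, whereas identifying the free Bredon module on the orbit $H\backslash G$ with an induced module, and establishing the exactness and projectivity-preservation of $\indu_{I_{H}}$ that allow resolutions to be transported between $\mathcal{O}_{\FF,H}H$ and $\mathcal{O}_{\FF,G}G$, is the one place where genuine information about the structure of Bredon modules is needed.
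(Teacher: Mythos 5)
Your proposal is correct and takes essentially the same route as the paper's own proof: the short exact sequence $\Z[-,E]\toi\Z[-,V]\tos\underline{\Z}$ from \cite[Corollary 3.4]{kropholler-08}, the associated long exact $\Ext_{\FF}$-sequence, and the Eckmann--Shapiro identification $\Ext^{n}_{\FF,G}(\Z[-,H\backslash G],-)\cong\coh^{n}_{\FF}(H,\res_{I_{H}}-)$ via \cite[Lemma 2.7]{symonds-05} and the $\indu_{I_{H}}\dashv\res_{I_{H}}$ adjunction. You merely make explicit some details the paper leaves implicit (contractibility of $T^{K}$ behind the exactness of the augmented complex, and the exactness and projectivity-preservation of $\indu_{I_{H}}$ needed to transport resolutions), which is a faithful elaboration rather than a different argument.
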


  \begin{cor}\label{bound} Let $T$ be a $G$-tree with  vertex set $V=\bigsqcup_{j\in J} N_{j}\backslash G$.  If there is a non-negative integer $n$ such that 
  $\cd_{\FF}N_{i}\leq n$ for all $i$, then $\cd_{\FF}G\leq n+1$.
  \begin{proof} It is an immediate consequence of Lemma \ref{MV}.
  \end{proof}
  \end{cor}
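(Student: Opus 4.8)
The plan is to read off the bound directly from the Mayer–Vietoris sequence established in Lemma~\ref{MV}. The statement to prove is Corollary~\ref{bound}: if $T$ is a $G$-tree with vertex set $V=\bigsqcup_{j\in J}N_{j}\backslash G$ and $\cd_{\FF}N_{i}\leq n$ for all $i$, then $\cd_{\FF}G\leq n+1$. Since $\cd_{\FF}G$ is the largest degree $k$ in which the Bredon-cohomology functor $\coh_{\FF}^{k}(G,-)$ is nonzero on some coefficient module, it suffices to show that $\coh_{\FF}^{k}(G,-)$ vanishes for all $k>n+1$.

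First I would instantiate the Mayer–Vietoris sequence from Lemma~\ref{MV} in degrees $k>n+1$. The relevant segment is
\begin{multline*}
\bigoplus_{j\in J}\coh_{\FF}^{k-1}(N_{j},\res_{I_{N_{j}}}-)\to\bigoplus_{i\in I}\coh_{\FF}^{k-1}(L_{i},\res_{I_{L_{i}}}-)\\
\to\coh_{\FF}^{k}(G,-)\to\bigoplus_{j\in J}\coh_{\FF}^{k}(N_{j},\res_{I_{N_{j}}}-).
\end{multline*}
The key observation is the hypothesis controls the vertex stabilisers directly: for $k\geq n+1$ we have $k>n\geq\cd_{\FF}N_{j}$, so the rightmost term $\bigoplus_{j}\coh_{\FF}^{k}(N_{j},-)$ vanishes. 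It remains to kill the incoming term, namely $\bigoplus_{i}\coh_{\FF}^{k-1}(L_{i},\res_{I_{L_{i}}}-)$. The edge stabilisers $L_{i}$ are subgroups of vertex stabilisers (each edge of a tree abuts a vertex, so $L_{i}\leq N_{j}$ for a suitable $j$ up to conjugacy), and Bredon cohomological dimension is monotone under passage to subgroups, hence $\cd_{\FF}L_{i}\leq\cd_{\FF}N_{j}\leq n$. For $k>n+1$ we have $k-1>n\geq\cd_{\FF}L_{i}$, so this term vanishes as well. With both neighbours of $\coh_{\FF}^{k}(G,-)$ in the exact sequence equal to zero, exactness forces $\coh_{\FF}^{k}(G,-)=0$ for every $k>n+1$ and every coefficient module, giving $\cd_{\FF}G\leq n+1$.

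The only genuine point requiring care is the monotonicity of $\cd_{\FF}$ under subgroups, used to control the edge stabilisers $L_{i}$ from the hypothesis on the vertex stabilisers $N_{j}$; this is standard in Bredon cohomology, following from the exactness of restriction along $I_{H}$ together with the fact that restriction sends projective Bredon modules to projective ones, and I would invoke it with a one-line citation to the background references \cite{luck-89, MR1851258}. Everything else is formal bookkeeping with a long exact sequence, which is why the author records it as an immediate corollary. I therefore anticipate no serious obstacle: the entire content is the interplay between the degree shift in the Mayer–Vietoris sequence and the subgroup bound on edge stabilisers.
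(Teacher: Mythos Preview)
Your proposal is correct and is precisely the intended unpacking of the paper's one-line proof, which simply cites Lemma~\ref{MV}. You have identified exactly the single nontrivial point (bounding $\cd_{\FF}L_{i}$ via $L_{i}\leq N_{j}$ and subgroup monotonicity of $\cd_{\FF}$), and the rest is the straightforward long-exact-sequence bookkeeping the paper omits.
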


Let $\mathfrak X$ be a class of groups admitting a finite-dimensional model for $\eg,$ and let ${\scriptstyle \mathbf {F_{B}}} \mathfrak{X}$ be the class of groups consisting  of those groups which are isomorphic to a fundamental group of a graph of groups in $\mathfrak{X},$ such that, for all vertex groups  $G_\lambda,$ there is a finite  bound $\bold{B}$ on the differences between  $\pd_{\Z G} B(G_\lambda,\Z)$ and $\cd_{\F}G_\lambda$. Note that for every integer $m\geq 1$ there are examples of groups such that $\pd_{\Z G} B(G,\Z)=2m$ yet $\cd_{\F}G=3m$ \cite{leary-03}.

\begin{definition}
We now define a subclass of $\UU$ using the above closure operation. We define:

\begin{itemize}
\item  $\UU^{\star}_{0}=\mathfrak{I}$
\item
$\UU^{\star}_{\alpha}=({\scriptstyle \mathbf {F_{B}}}\UU^{\star}_{\alpha -1})\BB$ if $\alpha<\omega_{0}$, 
\item 
$\UU^{\star}_{\omega_0}=\bigcup_{\beta<\omega_{0}}\UU^{\star}_{\beta}$. 
\end{itemize}
\end{definition}

\begin{thm}\label{2}Let $G$ be a  group in $\UU^{\star}_{\omega_{0}}$ such that $\pd_{\Z G} B(G,\Z) < \infty.$    Then $G$ admits a finite-dimensional model for $\eg.$

In particular, every  ${\scriptstyle \mathbf H}_{1}\FF$-group contained in the class $\UU^{\star}_{\omega_{0}}$ admits a finite-dimensional model for $\eg.$

\begin{proof} In light of L\"uck's Theorem \ref{lueckdim} it suffices to show that $G$ has finite Bredon cohomological dimension.

Suppose $G\in\UU^{\star}_{\alpha}$ and $\pd_{\Z G} B(G,\Z)=n$. If  $\alpha$ is finite, then $G$ is an extension $N\toi G \tos Q$ with $N\in {\scriptstyle \mathbf {F_{B}}}\UU^{*}_{\alpha-1}$ and $Q\in \mathfrak{B}$. It follows from \cite{CK} that $\pd_{\Z N} B(N,\Z) \leq \pd_{\Z G} B(G,\Z)=n$. Hence every vertex group $N_\lambda$  of the graph of groups associated to $N$ has $\pd_{\Z N_\lambda} B(N_\lambda,\Z)\leq n$, and Bredon cohomological dimension bounded by $n+\mathbf{B}$. By Corollary \ref{bound} we have that  $\cd_{\FF}N\leq n+\bold{B}+1$. 
Since $Q$ is in $\BB$, we have that $\cd_{\FF} Q < \infty$ and that $Q$ has a bound $t$ on the orders of its finite subgroups. An application of \cite[Theorem 3.1]{lueck-00} gives that $\cd_{\FF}G\leq t(n+\bold{B}+1)+\cd_{\FF}Q< \infty$.

Now suppose  $\alpha=\omega_{0}$ and $G\in \UU^{\star}_{\omega_{0}}=(\bigcup_{\beta<\omega_{0}}\UU^{\star}_{\beta})$. Hence $G \in \UU^*_{\beta}$ for some finite $\beta$ and we apply the above.
\end{proof}
\end{thm}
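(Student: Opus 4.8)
The plan is to reduce the statement to showing that every group $G$ in $\UU^\star_{\omega_0}$ satisfying $\pd_{\Z G}B(G,\Z)<\infty$ has finite Bredon cohomological dimension, after which L\"uck's Theorem~\ref{lueckdim} immediately produces a finite-dimensional model for $\eg$. The second sentence of the statement then follows: if $G\in{\scriptstyle \mathbf H}_1\FF$ it has finite $\F$-cohomological dimension (as recalled in the introduction), so by Proposition~\ref{link}(1) it satisfies $B(d)$ for some $d$, hence $\pd_{\Z G}B(G,\Z)\le d<\infty$ by Proposition~\ref{B(d)}(1); thus such a $G$ falls under the first assertion.

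For the main assertion I would argue by induction on the ordinal $\alpha<\omega_0$ with $G\in\UU^\star_\alpha$, the base case $\alpha=0$ being trivial. For the successor step, write $G$ as an extension $N\rightarrowtail G\twoheadrightarrow Q$ with $N\in{\scriptstyle \mathbf {F_B}}\UU^\star_{\alpha-1}$ and $Q\in\BB$. First invoke the subgroup behaviour of $\pd_{\Z -}B(-,\Z)$ from \cite{CK} to get $\pd_{\Z N}B(N,\Z)\le\pd_{\Z G}B(G,\Z)=:n$, and then the same inequality for each vertex group $N_\lambda$ of the graph-of-groups decomposition of $N$ via Proposition~\ref{B(d)}(2). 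By definition of ${\scriptstyle \mathbf {F_B}}$ there is a uniform bound $\mathbf B$ with $\cd_\FF N_\lambda\le\pd_{\Z N_\lambda}B(N_\lambda,\Z)+\mathbf B\le n+\mathbf B$; here one also needs that each $N_\lambda\in\UU^\star_{\alpha-1}$ has finite Bredon cohomological dimension, which is exactly the inductive hypothesis applied to $N_\lambda$ (noting $\pd_{\Z N_\lambda}B(N_\lambda,\Z)<\infty$). Corollary~\ref{bound} applied to the Bass--Serre tree of $N$ then gives $\cd_\FF N\le n+\mathbf B+1$. Finally, since $Q\in\BB$ has a bound $t$ on the orders of its finite subgroups and finite Bredon cohomological dimension, L\"uck's extension estimate \cite[Theorem~3.1]{lueck-00} yields $\cd_\FF G\le t(n+\mathbf B+1)+\cd_\FF Q<\infty$. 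The limit case $\alpha=\omega_0$ is immediate: any $G\in\UU^\star_{\omega_0}$ already lies in some $\UU^\star_\beta$ with $\beta$ finite, so the successor/base analysis applies.

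The step I expect to require the most care is ensuring the inductive hypothesis can legitimately be fed into the ${\scriptstyle \mathbf {F_B}}$ bound for the vertex groups: one must check that each vertex group $N_\lambda$ genuinely satisfies $\pd_{\Z N_\lambda}B(N_\lambda,\Z)<\infty$ (which follows from the inequality inherited from $G$) \emph{and} lies in $\UU^\star_{\alpha-1}$ (which is how ${\scriptstyle \mathbf {F_B}}\UU^\star_{\alpha-1}$ is defined), so that ``finite $\cd_\FF$'' is available and the uniform bound $\mathbf B$ upgrades it to the explicit estimate $n+\mathbf B$. A secondary point to verify is that the passage through $\BB$ correctly invokes that groups in $\BB$ have finite $\cd_\FF$: this is because a group satisfying $B(d)$ with bounded torsion lies in the scope of Theorem~\ref{KMthm}, hence has a finite-dimensional $\eg$ and thus finite $\cd_\FF$; alternatively one cites \cite[Theorem~3.1]{lueck-00} together with Remark~\ref{U1rem}. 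Everything else is bookkeeping with the ordinal hierarchy, the subgroup/extension behaviour of $B(-,\Z)$, and the two L\"uck bounds.
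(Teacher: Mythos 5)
Your proposal is correct and follows essentially the same route as the paper: reduce to finiteness of $\cd_{\FF}G$ via Theorem~\ref{lueckdim}, pass to the extension $N\rightarrowtail G\twoheadrightarrow Q$, control the vertex groups of $N$ by the subgroup behaviour of $\pd_{\Z -}B(-,\Z)$ from \cite{CK} together with the uniform bound $\mathbf B$ built into ${\scriptstyle \mathbf {F_{B}}}$, apply Corollary~\ref{bound} to the Bass--Serre tree, and finish with \cite[Theorem 3.1]{lueck-00}. The only slip is citing Proposition~\ref{B(d)}(2) (which concerns Weyl groups of finite subgroups) for the vertex-group inequality, where the relevant fact is again the subgroup property from \cite{CK}; this is immaterial, and your explicit treatment of the ``in particular'' clause and of why $\BB$-groups have finite $\cd_{\FF}$ only fills in details the paper leaves implicit.
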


\noindent The examples in  \cite{leary-03} show, that in general $t \neq 1.$

\begin{rem}\label{bemerkung}

One might be tempted to define a slightly larger class of groups by replacing ``bound on the orders of the finite subgroups" by ``bound on the lengths of the finite subgroups" throughout. Let ${\mathfrak B'}$ denote the class of groups satisfying condition $B(d)$ for some integer $d \geq 0$,  which have a bound on the lengths of their finite subgroups.  We then define $\UU'$ as above. This  gives a class equivalent to ${\mathcal U'}$ also defined analogously to the above. But for this class, we cannot use an analogous argument to that of Theorem \ref{2}, as  L\"uck's Theorem \cite[Theorem 3.1]{lueck-00} requires the quotient group to have a bound on the orders of the finite subgroups.  We have the following analogue to Remark \ref{U1rem}:

\begin{LMi}\label{U1lemma} Let $H \toi G \tos Q$ be a group extension, where $H$ is torsion-free of finite cohomological dimension $\cd H =n$  and $Q$ satisfies $B(d)$ for some integer $d \geq 0$ and has a bound $l$ on the lengths of its finite subgroups.   Then
$G$ admits a finite-dimensional model for $\eg$ of dimension $max\{3,l+n+d\}.$

In particular, ${\mathfrak U'}_1=( \mathbf {F_{1}}\mathfrak{I}){\mathfrak B'}={\mathfrak B'}$.

\end{LMi}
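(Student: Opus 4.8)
The plan is to reduce the statement to the known bound of Theorem \ref{lueckthm} (L\"uck's bound $\max\{3,d\}+l(d+1)$) applied to a suitable quotient, and then to track how a torsion-free kernel of finite cohomological dimension affects the Bredon cohomological dimension. First I would observe that since $H$ is torsion-free, every finite subgroup of $G$ maps isomorphically onto a finite subgroup of $Q$; hence $G$ has a bound $l$ on the lengths of its finite subgroups, inherited from $Q$. Next, using that $B(d)$ passes from $Q$ to $G$: more precisely, a $\Z G$-module that is projective on restriction to a finite subgroup $K\le G$ has projective dimension over $\Z G$ bounded in terms of its projective dimension over $\Z Q$ (which is $\le d$) plus $\cd H = n$, because the Lyndon--Hochschild--Serre spectral sequence for $H\toi G\tos Q$ with $\cd H=n$ raises projective dimension by at most $n$. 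So $G$ satisfies $B(n+d)$.

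The second step is to feed this into the finiteness machinery. One route: apply Theorem \ref{lueckthm} directly to $G$, which satisfies $B(n+d)$ and has bound $l$ on lengths of finite subgroups, yielding a model for $\eg$ of dimension $\le \max\{3,n+d\}+l(n+d+1)$. That already proves finite-dimensionality, but the stated bound is the sharper $\max\{3,l+n+d\}$, so I would instead argue more carefully. The cleaner approach is: $Q$ has a finite-dimensional model for $\eg$ of dimension $\le \max\{3,d\}+l(d+1)$ by Theorem \ref{lueckthm}, hence $\cd_{\FF}Q<\infty$; then show $\cd_{\FF}G\le \cd_{\FF}Q + n$ using that $H$ is torsion-free of cohomological dimension $n$ (so restriction to $H$ adds at most $n$ to Bredon cohomological dimension, via the analogue of the LHS spectral sequence in Bredon cohomology, cf. \cite{depeta}), and conclude by Theorem \ref{lueckdim}. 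To get exactly $\max\{3,l+n+d\}$ I expect one needs the refined inequality $\cd_{\FF}Q\le l+d$ for groups satisfying $B(d)$ with length bound $l$ — which is implicit in L\"uck's argument (the $l(d+1)$ term is the crude geometric-dimension estimate, while the cohomological-dimension estimate is additive of the form $l+d$ up to the constant) — together with $\cd_{\FF}G\le\cd_{\FF}Q+\cd H$.

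For the ``in particular'' clause: if $G\in\mathbf{F_1}\mathfrak{I}$ then $G$ is free, so in the extension $H\toi G\tos Q$ with $Q\in\mathfrak{B'}$ one has... actually the claim ${\mathfrak U'}_1={\mathfrak B'}$ unwinds as in Remark \ref{U1rem}: a group in $({\mathbf F_1}\mathfrak{I}){\mathfrak B'}$ is an extension of a free group $H$ (with $\cd H\le 1$) by a group $Q\in\mathfrak{B'}$; the Lemma gives a finite-dimensional $\eg$, hence by \cite[Theorem 3.1]{lueck-00} — wait, one cannot apply that here since $Q$ need not have bounded orders — so instead one argues that $G$ satisfies $B(d')$ for some $d'$ directly from finite-dimensionality of $\eg$, and that $G$ has bounded lengths of finite subgroups (being an extension of a torsion-free group by $Q\in\mathfrak{B'}$), so $G\in\mathfrak{B'}$; the reverse inclusion $\mathfrak{B'}\subseteq{\mathfrak U'}_1$ is trivial since the trivial group lies in ${\mathbf F_1}\mathfrak{I}$.

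The main obstacle I anticipate is pinning down the exact dimension bound $\max\{3,l+n+d\}$ rather than the weaker finite bound: this requires being careful about whether the length bound contributes additively ($+l$) or multiplicatively, and about the interaction between the torsion-free kernel and L\"uck's inductive construction over the poset of finite subgroups. The referee's acknowledged improvement to the bound suggests precisely this point was delicate. The inputs — that $B(d)$ is inherited by extensions with torsion-free finite-cohomological-dimension kernel, that Bredon cohomological dimension is subadditive along such extensions, and L\"uck's dimension estimates — are all available; assembling them with the optimal constant is the real content.
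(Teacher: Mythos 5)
There is a genuine gap, and it sits exactly where you suspected: in obtaining the bound $\max\{3,l+n+d\}$ rather than merely finite-dimensionality. Your ``cleaner approach'' rests on the inequality $\cd_{\FF}G\le \cd_{\FF}Q+\cd H$, and this inequality is false. Take $Q$ finite, so that $\cd_{\FF}Q=0$ and $Q$ trivially satisfies $B(0)$ with some length bound $l$: your inequality would then force $\cd_{\FF}G\le \cd H=\vcd G$ for every virtually torsion-free group, which is refuted by the examples of Leary and Nucinkis \cite{leary-03} cited in the paper, where $\vcd G=2m$ but $\cd_{\FF}G=3m$. The underlying error is that the Bredon/Mart\'inez-P\'erez spectral sequence for $H\toi G\tos Q$ has $E_2$-coefficients involving the Bredon cohomology of the \emph{preimages} $\Gamma$ of the finite subgroups of $Q$ (finite extensions of $H$), not the ordinary cohomology of $H$; and $\cd_{\FF}\Gamma$ can strictly exceed $\vcd\Gamma=n$, again by \cite{leary-03}. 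Your first route (showing $G$ satisfies $B(n+d)$ and applying Theorem \ref{lueckthm}) does yield finite-dimensionality in principle, but even there you only verify the condition $b(n+d)$ for $G$ itself, whereas $B(n+d)$ requires it for every Weyl group $W_GK$; and in any case it gives the multiplicative bound $\max\{3,n+d\}+l(n+d+1)$, not the claimed one.

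The paper's mechanism for the sharp bound is entirely different and is the missing idea: the invariant that \emph{is} additive along the extension is the Gorenstein dimension. By \cite[Theorem 2.8]{abdeta} one has $\Gcd G\le \Gcd H+\Gcd Q=\cd H+\Gcd Q\le n+d$ (using \cite[Theorem 2.7]{abdeta} and \cite[Proposition 2.3]{Tal07} to identify $\Gcd Q=\pd_{\Z Q}B(Q,\Z)\le d$). Once finiteness of $\cd_{\FF}G$ is known (the paper gets this from \cite[Corollary 5.2]{MP02} combined with L\"uck's Theorems 6.4 and 1.10 applied to the preimages $\Gamma$ and to $Q$ respectively), a further application of \cite[Theorem 2.7]{abdeta} gives $\pd_{\Z G}B(G,\Z)=\Gcd G\le n+d$, and then Mart\'inez-P\'erez's bound \cite[Theorem 3.10]{mart07}, namely $\cd_{\FF}G\le \pd_{\Z G}B(G,\Z)+l$ \emph{applied to $G$} (which inherits the length bound $l$ from $Q$, as you correctly observed), yields $\cd_{\FF}G\le n+d+l$; Theorem \ref{lueckdim} then converts this into the stated geometric bound. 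So the additive $+l$ you hoped was ``implicit in L\"uck's argument'' is really Mart\'inez-P\'erez's theorem, and it must be fed the subadditive invariant $\pd_{\Z G}B(G,\Z)=\Gcd G$ rather than $\cd_{\FF}$. Your treatment of the ``in particular'' clause is correct and matches the paper.
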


\begin{proof} The first assertion is basically a special case of \cite[Corollary 5.2]{MP02}. Every finite subgroup of $G$ is isomorphic to a finite subgroup of $Q$ with length bounded by $l$. Hence, every finite extension $\Gamma$ of $H$ is virtually torsion-free and the lengths of the finite subgroups are bounded by $l$. Hence, by \cite[Theorem 6.4]{lueck-00} $\Gamma$ admits a $max\{3,n\}+l$-dimensional model for $E_{\FF}\Gamma$. By \ref{lueckthm} $Q$ admits  a $max\{3,d\}+l(d+1)$-dimensional model for $E_{\FF}Q$. Now apply  \cite[Corollary 5.2]{MP02} giving   a 
$max\{3,n\}+max\{3,d\}+l(d+2)$-dimensional model for $\eg.$

This bound can be improved to that of the claim as follows: \cite[Theorem 2.7]{abdeta} and \cite[Proposition 2.3]{Tal07} imply that $\pd_{\Z Q}B(Q,\Z)=\Gcd Q=m \leq d.$ Now one can use \cite[Theorem 2.8]{abdeta}, which  states that $\Gcd G \leq \Gcd H+\Gcd Q=\cd H+\Gcd Q,$ hence $\Gcd G \leq n+m.$ Since, by the above, $\cd_{\FF} G <\infty,$ and in particular $\pd_{\Z G} B(G,\Z)<\infty$, a further application of \cite[Theorem 2.7]{abdeta} yields that $\pd_{\Z G} B(G,\Z)=\Gcd G$. We now apply \cite[Theorem 3.10]{mart07}, to get that $\cd_{\FF} G \leq \pd_{\Z G} B(G,\Z) +l \leq n+d+l$ as required.

Now let $G \in {\mathfrak U}_1$. Hence it is an extension of a free group by a group $Q$ in $\mathfrak B$. By the above, $G$ admits a finite-dimensional  model for $\eg$, hence satisfies $B(d')$ for some $d'$, and has a bound on the lengths of its finite subgroups. 
\end{proof}

\end{rem}

\begin{thm}\label{rhotheorem} Suppose that there exists a function $\rho : \mathbb{N} \to \mathbb{N}$ such that $\gd_{\FF}G\leq \rho(\pd_{\Z G} B(G,\Z) )$ for every group $G$ of finite Bredon cohomological dimension. Then every group in $\UU$ such that $\pd_{\Z G} B(G,\Z)< \infty$ admits a finite-dimensional model for $\eg.$

In particular, in this case,  Kropholler and Mislin's conjecture holds inside $\UU$.

\begin{proof} This can be proved similarly to  Theorem \ref{2}, using transfinite induction. Let $G \in \UU$ such that $\pd_{\Z G} B(G,\Z) =n.$ We prove that $\cd_{\FF} G\leq \rho(n)$. The claim is obviously true for $G \in \UU_0$ and $G \in \UU_1$. Now let $G \in \UU_\alpha.$ If $\alpha$ is a successor ordinal, we have, as above that $G$ is an extension $N\toi G \tos Q$ with $N$ a graph of groups $N_\lambda \in \UU_{\alpha-1}$. Now by induction, $\cd_{\FF}N_\lambda \leq \rho(n)$ and hence, as above, $\cd_{\FF}G\leq t(\rho(n)+1)+\cd_{\FF}Q < \infty.$ Hence, by assumption $ \cd_{\FF}G \leq \rho(n).$ 

For $\alpha$ a limit ordinal we have that $G \in \UU_\beta$ for some $\beta < \alpha$ and we are done. 
\end{proof}
\end{thm}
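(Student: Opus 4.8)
The plan is to derive Theorem \ref{rhotheorem} by a transfinite induction on the ordinal $\alpha$ with $G \in \UU_\alpha$, establishing the stronger quantitative statement that $\cd_{\FF} G \leq \rho(\pd_{\Z G} B(G,\Z))$ whenever $\pd_{\Z G} B(G,\Z) < \infty$; the final sentence then follows because an ${\scriptstyle \mathbf H}_1\FF$-group $G$ has a finite-dimensional contractible $G$-$\CW$-complex with finite stabilisers, so in particular $\F\cd G < \infty$, and by Proposition \ref{link}(1) (together with Proposition \ref{B(d)}, if needed to pass from $B(d)$ to $\pd_{\Z G} B(G,\Z) < \infty$ using $\UU \subseteq {\scriptstyle \mathbf H}\FF$) it satisfies $\pd_{\Z G} B(G,\Z) < \infty$, and then Theorem \ref{lueckdim} converts finite $\cd_{\FF} G$ into a finite-dimensional model for $\eg$. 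Write $n = \pd_{\Z G} B(G,\Z)$ throughout; one fixes $n$ and runs the induction.

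The base cases $\UU_0 = \mathfrak{J}$ and $\UU_1 = \BB$ are immediate: the trivial group is trivial, and for $G \in \BB$ one has $\cd_{\FF} G < \infty$ by definition of $\BB$ combined with L\"uck's Theorem \ref{lueckthm} (a group satisfying $B(d)$ with bounded-order finite subgroups has a finite-dimensional model for $\eg$), so by assumption on $\rho$ one gets $\cd_{\FF} G \leq \rho(n)$. For the successor step, suppose $G \in \UU_\alpha = ({\scriptstyle \mathbf{F_1}}\UU_{\alpha-1})\BB$, so there is an extension $N \toi G \tos Q$ with $N$ the fundamental group of a graph of groups whose vertex groups $N_\lambda$ lie in $\UU_{\alpha-1}$ and $Q \in \BB$. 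By the result of \cite{CK} cited in the proof of Theorem \ref{2}, $\pd_{\Z N} B(N,\Z) \leq \pd_{\Z G} B(G,\Z) = n$; one then needs the same inequality for each vertex group $N_\lambda$ — this is Proposition \ref{B(d)}(2) applied to the subgroup $N_\lambda \leq N$ (every vertex group of a graph of groups embeds), giving $\pd_{\Z N_\lambda} B(N_\lambda,\Z) \leq n$. By the inductive hypothesis $\cd_{\FF} N_\lambda \leq \rho(n)$ for all $\lambda$, so Corollary \ref{bound} yields $\cd_{\FF} N \leq \rho(n) + 1 < \infty$. Since $Q \in \BB$ has a bound $t$ on the orders of its finite subgroups and $\cd_{\FF} Q < \infty$, L\"uck's extension result \cite[Theorem 3.1]{lueck-00} gives $\cd_{\FF} G \leq t(\cd_{\FF} N) + \cd_{\FF} Q \leq t(\rho(n)+1) + \cd_{\FF} Q < \infty$. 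Now comes the essential use of the hypothesis on $\rho$: $G$ has finite Bredon cohomological dimension, so the inequality $\gd_{\FF} G \leq \rho(\pd_{\Z G} B(G,\Z)) = \rho(n)$ applies, and a fortiori $\cd_{\FF} G \leq \gd_{\FF} G \leq \rho(n)$, closing the induction. For a limit ordinal $\alpha$ one simply has $G \in \UU_\beta$ for some $\beta < \alpha$ since $\UU_\alpha = \bigcup_{\beta<\alpha}\UU_\beta$, so the inductive hypothesis applies directly.

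The point worth emphasizing, and where the argument genuinely differs from Theorem \ref{2}, is the bootstrapping role of $\rho$: the transfinite induction only ever produces a crude finite bound on $\cd_{\FF} G$ that degrades with $\alpha$ (each successor step multiplies by $t$ and adds $\cd_{\FF} Q$, so without a reset the bound would blow up over long ordinal towers and there would be no uniform control along a limit ordinal of uncountable cofinality). The hypothesized function $\rho$ acts as a regularizing device: once we know $\cd_{\FF} G < \infty$, we immediately re-bound it by $\rho(n)$, a quantity depending only on $n$ and not on $\alpha$, which is exactly what makes the induction go through at all ordinal stages. The main obstacle is therefore not any single step but ensuring the bound is genuinely uniform in $\alpha$ — i.e. that every appeal to the inductive hypothesis feeds in the $\alpha$-independent bound $\rho(n)$ rather than the inflated one — and checking that the hypotheses of \cite[Theorem 3.1]{lueck-00} (bounded orders of finite subgroups in the quotient $Q$) and of Corollary \ref{bound} (uniform finite bound $\rho(n)$ on the vertex groups) are met at each successor stage; both hold by construction of $\UU_\alpha$ and by the inductive hypothesis respectively.
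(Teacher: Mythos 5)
Your proof is correct and follows essentially the same route as the paper's: a transfinite induction establishing the uniform bound $\cd_{\FF}G\leq\rho(n)$, where the crude estimate $t(\rho(n)+1)+\cd_{\FF}Q$ obtained from Corollary \ref{bound} and L\"uck's extension theorem serves only to establish \emph{finiteness} of $\cd_{\FF}G$ before the hypothesis on $\rho$ resets the bound to the $\alpha$-independent value $\rho(n)$ --- exactly the bootstrapping the paper performs. (One minor citation slip: the inequality $\pd_{\Z N_\lambda}B(N_\lambda,\Z)\leq n$ for the vertex groups is the subgroup monotonicity from \cite{CK}, not Proposition \ref{B(d)}(2), which concerns Weyl groups of finite subgroups.)
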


\begin{rem}\label{finiteexamples}  Every group admitting a finite model for $\eg$ lies in $\BB$, as these have finitely many conjugacy classes of finite subgroups \cite{lueck-00}. This includes free groups, Gromov-hyperbolic groups \cite{MS}, $\Out(F_n)$, where $F_n$ is a free group of rank $n$ \cite{vogtman}, mapping class groups \cite{M10} and elementary amenable groups of type $\FP_\infty$ \cite{kropholler-08}.
\end{rem}

\begin{examples}\label{ex} We list further examples of groups in $\UU$. 
\begin{enumerate} 
\item Countable locally free groups belong to $ \UU_{1}$ as they are torsion-free of Bredon-cohomological dimension $\leq 2.$
\item Free Burnside groups $B(m, n)$ of large odd exponent lie in $\UU_{1}\backslash\UU_{0}$.  It is known by \cite{adiansolu} that $B(m, n)$ are infinite for large enough exponent and that they have a bound on the orders of their finite subgroups. By \cite{ivanovburn}  they admit an action on a contractible $2$-dimensional $\CW$-complex with cyclic stabilisers and hence are contained in $\BB \backslash \mathfrak{I}.$ 

\noindent Hence Petrosyan's class $\mathcal{N}^{cell}(\mathcal{P}_{6}) \neq \UU$. If $G\in \mathcal{N}^{cell}(\mathcal{P}_{6}) $, then  either it contains a free subgroup on two generators or it is countable elementary amenable \cite[Theorem 3.9]{neste}.  A finitely generated infinite periodic  group cannot be elementary amenable, therefore free Burnside groups  of large odd exponent are not contained  in $ \mathcal{N}^{cell}(\mathcal{P}_{6})$ but they belong to $\BB$.

\item $\UU_{2}$ contains all countable abelian groups, and $\UU$ contains all countable elementary amenable groups. Every finitely generated abelian group lies in $\BB$ and so every countable  abelian group $G$ can be realised as a group acting on a tree with finitely generated abelian stabilisers and so  $G\in\UU_{2}$. 
Clearly $\BB$ contains all finite groups and $\UU$ is closed under taking countable directed unions and so it contains all countable locally finite groups. By Proposition \ref{closu}, $\UU$ is closed under taking extensions and so it contains all countable elementary amenable groups.

\item  Let $\{F_{i}\}_{i\in I}$ be an infinite countable ordered family of finite subgroups such that $|F_{i}|<|F_{i+1}|$. If $G=\bigast_{i\in I}F_{i}$, then $G\in \UU_{2}\backslash \UU_{1}$. $G$ has no bound on the orders of its finite subgroups but it is realised as the fundamental group of a graph of finite groups, and so $G\in\UU_{2}\backslash\UU_{1}$.

\item For every $n$, Houghton's groups $\mathfrak{H}_{n}$ lie in $ \UU_{2}\backslash \UU_{1}$. The group $\mathfrak{H}_{n}$ is isomorphic to an extension of the infinite countable finitary symmetric group  $\Theta$ by $\mathbb{Z}^{n-1}$. The group $\Theta$ is countable \cite[Exercise 8.1.3]{dixomont} and locally finite.  Hence $\Theta\in {\scriptstyle \mathbf {F_{1}}}\UU_{1}$ and $\mathfrak{H}_{n}\in\UU_{2}$.  
\item
Dunwoody's inaccessible group  $\mathfrak{D}$ lies in $\UU_{3}$ \cite{inacc}.

\noindent The group $\mathfrak{D}$ is the fundamental group of a graph $X$ of groups. Every  edge group is finite and the only non-finite vertex group is isomorphic to a free product with amalgamation $Q_{n}\ast_{H_{\omega}}H$. Where $Q_{n}$ is the fundamental group of an infinite graph of groups with all finite edge and vertex groups, $H_{\omega}$ is an infinite countable locally finite group, hence lies in $ \UU_{2}\backslash\UU_{1}$. $H$ is isomorphic to a semidirect product of the infinite finitary symmetric group on a countable set by  an infinite cyclic group, hence $H\in \UU_{2}\backslash\UU_{1}$. 
It is clear from  the construction  that $\mathfrak{D}\in\UU_{3}$ and $\cd_{\mathbb{Q}}\mathfrak{D}\leq 4$. Note that $\mathfrak{D}$ has no bound on the orders of its finite subgroups, which follows from its construction or from Linnell's theorem on inaccessible groups \cite{linnell}. 
\end{enumerate}
With the exception of elementary amenable groups, it can be seen directly that all the above groups lie in $\UU^*_{\omega_0},$ since for finite groups $K$, $\cd_{\FF} K = \pd_{\Z k} B(K,\Z)=0$ and for finitely generated abelian groups $A$, $\cd_{\FF} A = \pd_{\Z A} B(A,\Z) = h(A)$, the Hirsch-length of $A$. \end{examples}

\begin{LM}
Countable elementary amenable groups lie in $\UU^*_{\omega_0}.$
\end{LM}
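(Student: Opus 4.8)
**Proof proposal for: Countable elementary amenable groups lie in $\UU^*_{\omega_0}$.**

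The plan is to induct along the standard hierarchical structure of the class of elementary amenable groups. Recall that the class $\mathcal{EA}$ of elementary amenable groups is the smallest class containing all finite groups and all abelian groups which is closed under taking subgroups, quotients, extensions, and directed unions; equivalently, by a theorem of Kropholler–Linnell–Moody (or the Hillman–Linnell characterisation), $\mathcal{EA} = \bigcup_\alpha \mathcal{EA}_\alpha$ where $\mathcal{EA}_0$ is the trivial group, $\mathcal{EA}_{\alpha+1} = (\mathbf{L}\mathcal{EA}_\alpha)(\mathcal{F}\mathcal{A}b)$ (i.e.\ extensions of locally-$\mathcal{EA}_\alpha$ groups by finitely generated abelian groups), and one takes unions at limits. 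I would first reduce to this hierarchy, restricting attention throughout to \emph{countable} groups; since a countable locally-$\mathcal{EA}_\alpha$ group is a countable directed union of $\mathcal{EA}_\alpha$ subgroups, and $\UU^*_{\omega_0}$ is built from a closure operation involving graphs of groups, I would use the fact (as in Proposition \ref{closu}) that a countable directed union of groups in a class $\mathfrak{X}$ acts on a tree with vertex stabilisers conjugate into $\mathfrak{X}$, so countable directed unions of groups in $\mathbf{F_B}\UU^*_\beta$-nice groups stay controlled.

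The key technical point, and the main obstacle, is that $\UU^*_{\omega_0}$ is defined using the operation $\mathbf{F_B}$, which requires a \emph{uniform} bound $\mathbf{B}$ on the differences $\pd_{\Z G_\lambda} B(G_\lambda,\Z) - \cd_{\F} G_\lambda$ over the vertex groups of the graph of groups. So at each stage of the induction I must verify not merely that the relevant subgroups and extension pieces lie in some $\UU^*_\beta$, but that they do so with a controllable gap between the $B(G,\Z)$-projective dimension and the $\F$-cohomological dimension. The base of the induction is handled by the observation already recorded just before this lemma: for finite groups $K$ one has $\cd_{\FF} K = \pd_{\Z K} B(K,\Z) = 0$, and for finitely generated abelian $A$ one has $\cd_{\FF} A = \pd_{\Z A} B(A,\Z) = h(A)$; in both cases the gap is zero. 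Thus $\BB$ already contains $\mathcal{F}\mathcal{A}b$ with gap $0$, and $\mathcal{EA}_1$-groups (countable, so countable directed unions of finitely generated abelian groups, hence acting on trees with finitely generated abelian vertex stabilisers) land in $\UU^*_1$ with a uniform gap. The inductive step then takes a countable $G \in \mathcal{EA}_{\alpha+1}$, writes it as an extension $N \toi G \tos Q$ with $Q$ finitely generated abelian (hence in $\BB$) and $N$ locally $\mathcal{EA}_\alpha$ and countable, hence a countable directed union of $\mathcal{EA}_\alpha$-subgroups $N_i$; by induction each $N_i$ lies in $\UU^*_{\beta_i}$ with uniformly bounded gap, so $N$ acts on a tree with the $N_i$ as vertex stabilisers, placing $N \in \mathbf{F_B}\UU^*_{\beta}$ for a suitable finite $\beta$, and therefore $G = N\cdot Q \in (\mathbf{F_B}\UU^*_\beta)\BB = \UU^*_{\beta+1}$.

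To make the gap-control precise I would invoke the same circle of results used in Remark \ref{bemerkung}: by \cite[Theorem 2.7]{abdeta} and \cite[Proposition 2.3]{Tal07}, for groups for which these invariants are finite one has $\pd_{\Z G} B(G,\Z) = \Gcd G$, and by \cite[Theorem 3.10]{mart07} (or \cite[Theorem 2.8]{abdeta} for the extension estimate) $\cd_{\FF} G$ is bounded above by $\pd_{\Z G} B(G,\Z)$ plus a term depending only on a bound on the lengths of finite subgroups — and countable elementary amenable groups of the type arising here do have such bounds at each finite stage, since the finite subgroups of an $\mathcal{EA}_\alpha$-group are built from at most $\alpha$-many abelian-by-finite layers. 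Combining these gives that at each finite level $\beta$ the quantity $\pd_{\Z G} B(G,\Z) - \cd_{\F} G$ is bounded by a constant $\mathbf{B}(\beta)$ depending only on $\beta$, which is exactly what is needed to apply $\mathbf{F_B}$. Iterating through all finite $\alpha$ and taking the union, every countable elementary amenable group lands in some $\UU^*_\beta$ with $\beta < \omega_0$, hence in $\UU^*_{\omega_0}$. The one delicate bookkeeping issue is ensuring the local-to-global step for directed unions does not force $\beta$ to grow without bound across the union; this is handled because all members $N_i$ of a single directed system sit at the \emph{same} ordinal level $\alpha-1$ of the $\mathcal{EA}$-hierarchy, so the induction hypothesis applies with a single $\beta$ and a single gap bound to the whole family at once.
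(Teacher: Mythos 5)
Your overall architecture is the same as the paper's: run up the Kropholler--Linnell--Moody hierarchy $\X_\alpha=(L\X_{\alpha-1})\Y$, convert countable directed unions into actions on trees as in Proposition \ref{closu} so that countable locally-$\mathfrak Z$ groups land in ${\scriptstyle \mathbf {F_{B}}}\mathfrak Z$, and absorb the top f.g.\ abelian(-by-finite) quotient into $\BB$. You also correctly identify the crux: the operation ${\scriptstyle \mathbf {F_{B}}}$ demands a uniform bound on $\cd_{\FF}G_\lambda-\pd_{\Z G_\lambda}B(G_\lambda,\Z)$ over the vertex groups. The problem is your mechanism for producing that bound. You propose to get it from \cite[Theorem 3.10]{mart07}, i.e.\ from a bound $l$ on the \emph{lengths of the finite subgroups} of the groups at each finite stage, asserting that finite subgroups of an $\X_\alpha$-group are ``built from at most $\alpha$-many abelian-by-finite layers.'' This is false already at $\alpha=2$: the class $L\X_1$ contains every countable locally finite group (e.g.\ $\bigoplus_{i\in\mathbb N}\Z/2$ or the finitary symmetric group $\Theta$), and a finitely generated group such as the lamplighter group $\Z/2\wr\Z$ lies in $(L\X_1)\Y$ while having finite subgroups of unbounded length and order. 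Being locally-$\X_{\alpha-1}$ imposes no restriction whatsoever on finite subgroups, so no such $l$ exists for the vertex groups arising in your induction, and the gap-control step collapses. Indeed, handling exactly these unbounded-torsion examples is the point of the whole paper, so any argument that reintroduces a torsion-length bound cannot work here.

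The paper's proof replaces this step by a bound that is uniform over \emph{all} countable elementary amenable groups and makes no reference to torsion: if $\pd_{\Z G}B(G,\Z)=d$, then $\spli G\le d+1$ by \cite[Theorem 2.7]{abdeta} and \cite[Proposition 2.1]{Tal07}; by Gedrich--Gruenberg \cite{GG} the Hirsch length satisfies $h(G)\le \spli G\le d+1$; and by Flores--Nucinkis \cite{flores-05} one has $\cd_{\FF}G\le h(G)+1$ for countable elementary amenable $G$. Hence $\cd_{\FF}G\le \pd_{\Z G}B(G,\Z)+2$, giving the single constant $\mathbf B=2$ that feeds into ${\scriptstyle \mathbf {F_{B}}}$ at every stage of the hierarchy simultaneously. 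If you substitute this Hirsch-length argument for your finite-subgroup-length argument, the rest of your induction (including your observation that all members of one directed system sit at the same ordinal level, so a single $\mathbf B$ suffices) goes through as in the paper.
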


\begin{proof}
Suppose first that $G$ is an elementary amenable group with $\pd_{\Z G} B(G,\Z)=d$. \cite[Theorem 2.7]{abdeta} and \cite[Proposition 2.1]{Tal07} imply that  $\spli G \leq d+1.$ It now follows from \cite{GG}, that $h(G) \leq d+1.$  Furthermore, for countable elementary amenable groups, $\cd_{\FF} G \leq h(G)+1$ \cite{flores-05}. Thus $\cd_{\FF}G \leq \pd_{\Z G} B(G,\Z) +2$.
The general case now follows directly using the hierarchical description of elementary amenable groups given in \cite{KLM}: Let $\Y$ denote the class of all finitely generated abelian-by-finite
groups. For each ordinal $\alpha$  the class $\X_\alpha$ is defined by
\begin{eqnarray*}
\X_0 = & \mathfrak{I} &\cr
\X_\alpha = & (L\X_{\alpha -1})\Y & \mbox{if  $\alpha$  is  a 
successor  ordinal,} \cr
\X_\alpha =& \bigcup_{\beta < \alpha} \X_\beta & \mbox{if $ \alpha$
is  a limit  ordinal.} \cr
\end{eqnarray*}
The class of all elementary amenable groups is defined by setting
$$ \X = \bigcup_{\alpha \geq 0} \X_\alpha.$$
Note that countable elementary amenable groups lie in $\X_{\omega_0}$. Let  $\mathfrak{Z}$ be a class of  groups and let $G\in L{\mathfrak{Z}}$ be a countable group, for which there is a finite $\mathbf B$ such that for every finitely generated subgroup $H$ of $G,$ we have $\cd_{\FF}H \leq \pd_{\Z H}B(H,\Z)+\mathbf{B}.$ Then automatically, $G \in {\scriptstyle \mathbf {F_{B}}}{\mathfrak Z}.$ 
Since finitely generated abelian-by-finite groups lie in $\BB$ and by the above, for every countable group in $\X$, there is such a $\mathbf{B}=2$, the claim follows.
\end{proof}

Also note that elementary amenable groups of finite Hirsch-length lie in $\UU^*_2.$ By a Theorem of Hillman and Linnell \cite{HL} they are locally finite-by virtually (torsion-free soluble). Since virtually torsion-free soluble groups of finite Hirsch length belong to $\BB$ and finite groups satisfy $\pd_{\Z G}B(G,\Z)=\cd_{\FF}G=0,$ the above claim follows.

Let   $\underline{G}$ be the finitely generated group   constructed in \cite{dunjo1}  admitting the remarkable decomposition $\underline{G}=A \ast_{\Z}\underline{G}$ . 

\begin{prop}The group $\underline{G}$  has a bound on the orders of its finite subgroups and   belongs to $\BB$. 
\begin{proof} In \cite{dun11} it is shown that the group $\underline{G}$ can be realised as the fundamental group of a graph of groups $Y$ with two orbits of vertices $V Y$ and  two orbits of edges $EY$. Each finite subgroup of $\underline{G}$ must lie in one of the conjugates of the $A$ factors. Since $A = \langle a; b \,|\, b^{3} = 1; a^{-1}ba = b^{-1}\rangle$ every finite subgroup has order  bounded by $3$. Moreover, $\underline{G}\cong G_1*_L G_2$ where $G_1$ and $G_2$ are fundamental groups of graphs of groups with all virtually cyclic stabilisers, and $L$ is locally infinite cyclic. Hence, by Lemma \ref{MV}  and \cite[Theorem 3.1]{lueck-00}  $\underline{G}\in\BB$.
\end{proof}
\end{prop}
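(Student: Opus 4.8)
The plan is to avoid using the self-referential splitting $\underline{G}=A\ast_{\Z}\underline{G}$ of \cite{dunjo1} directly (it is circular for an inductive argument) and instead import the concrete graph-of-groups description of $\underline{G}$ from \cite{dun11}: namely that $\underline{G}$ is the fundamental group of a finite graph of groups $Y$ with two orbits of vertices and two orbits of edges, in which the only vertex groups carrying torsion are (conjugates of) the $A$-factors $A=\langle a,b\mid b^{3}=1,\ a^{-1}ba=b^{-1}\rangle$, and that moreover $\underline{G}$ splits as an amalgam $\underline{G}\cong G_{1}\ast_{L}G_{2}$ where $G_{1},G_{2}$ are fundamental groups of (possibly infinite) graphs of groups all of whose edge and vertex groups are virtually cyclic and $L$ is locally infinite cyclic. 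With these structural facts the two assertions become a short homological argument.

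For the bound on the orders of the finite subgroups I would use Bass--Serre theory: a finite subgroup of the fundamental group of a graph of groups fixes a vertex of the Bass--Serre tree of $Y$, hence is conjugate into a vertex group, and by the description of $Y$ this means every finite subgroup of $\underline{G}$ is conjugate into a copy of $A$. Now $A$ is an extension $\Z/3\hookrightarrow A\twoheadrightarrow\Z$ with torsion-free quotient (the subgroup $\langle b\rangle$ is normal with $A/\langle b\rangle\cong\Z$), so every finite subgroup of $A$ maps trivially to $\Z$ and therefore lies in $\langle b\rangle\cong\Z/3$. Hence $\underline{G}$ has a bound, namely $3$, on the orders of its finite subgroups.

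To establish $\underline{G}\in\BB$ it remains to find an integer $d$ with $\underline{G}$ satisfying $B(d)$, and for this it suffices to prove $\cd_{\FF}\underline{G}<\infty$: then $\underline{G}$ admits a finite-dimensional model for $\eg$ by Theorem~\ref{lueckdim}, hence $\F\cd\underline{G}<\infty$ as noted in the introduction, hence $\underline{G}$ satisfies $B(d)$ for some $d$ by Proposition~\ref{link}(1) — this is exactly the implication used in Remark~\ref{U1rem}. To bound $\cd_{\FF}\underline{G}$ I would apply Corollary~\ref{bound} twice. An infinite virtually cyclic group has $\cd_{\FF}\leq 1$ and a finite group has $\cd_{\FF}=0$, so applying Corollary~\ref{bound} to the Bass--Serre tree of the graph of virtually cyclic groups defining $G_{i}$ gives $\cd_{\FF}G_{i}\leq 2$ for $i=1,2$. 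Since $\underline{G}=G_{1}\ast_{L}G_{2}$ is the fundamental group of a graph of groups with vertex groups $G_{1},G_{2}$, a further application of Corollary~\ref{bound} (the edge group $L$ embeds in $G_{1}$, so $\cd_{\FF}L\leq 2$ is automatic, and in any case Corollary~\ref{bound} only requires the vertex bound) yields $\cd_{\FF}\underline{G}\leq 3<\infty$. Combined with the torsion bound of the previous paragraph, this places $\underline{G}$ in $\BB$.

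The main obstacle I anticipate is not the homological assembly above, which is routine once the inputs are in place, but extracting from \cite{dunjo1} and \cite{dun11} the precise structural statements used: that the finite graph of groups $Y$ has its torsion concentrated in the $A$-factors with the stated presentation, and that the two halves $G_{1},G_{2}$ of the amalgam are genuinely graphs of virtually cyclic groups with locally-infinite-cyclic amalgamating subgroup. Once these are verified, the argument is as above; equivalently, in place of the two applications of Corollary~\ref{bound} one may invoke the Mayer--Vietoris sequence of Lemma~\ref{MV} together with \cite[Theorem 3.1]{lueck-00}, exactly as in the analogous arguments elsewhere in the paper.
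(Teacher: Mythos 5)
Your proposal is correct and follows essentially the same route as the paper: the same Bass--Serre/conjugacy argument reducing finite subgroups to the $\Z/3$-by-$\Z$ group $A$ (giving the bound $3$), and the same use of the decomposition $\underline{G}\cong G_1\ast_L G_2$ into graphs of virtually cyclic groups together with the Bredon Mayer--Vietoris sequence (Lemma \ref{MV}/Corollary \ref{bound}) to get $\cd_{\FF}\underline{G}<\infty$ and hence membership in $\BB$. You merely make explicit the final chain ($\cd_{\FF}<\infty \Rightarrow$ finite-dimensional $\eg$ $\Rightarrow \F\cd<\infty \Rightarrow B(d)$) that the paper leaves implicit, and you correctly flag that the structural inputs from \cite{dunjo1} and \cite{dun11} are the only nontrivial imports.
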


This is in contrast with finitely generated infinite groups of the form $G=A \times G$ with $A$ infinite.  By an argument similar to \cite[Theorem 4.11]{gg}, we see that these groups must have infinite rational cohomological dimension. 
Tensoring with $\mathbb{Q}$ over $\Z$  the augmented cellular chain complex  of a classifying space for proper actions  leads to a $\mathbb{Q}G$-projective resolution of the trivial module $\mathbb{Q}$, therefore if $G=A \times G$ (with $A$ infinite)  then $G$ has infinite Bredon cohomological dimension.

\begin{rem} Note that if there is a countable (periodic) ${\scriptstyle \mathbf H}_{1}\FF$-group  that does not lie in $\UU,$ then there exists a finitely generated  (periodic) ${\scriptstyle \mathbf H}_{1}\FF$-group 
with no bound on the orders of its finite subgroups.  To see this, suppose that $G$ is  a countable (periodic) ${\scriptstyle \mathbf H}_{1}\FF$-group not belonging to $\UU$. Then $G$ is the directed union of its finitely generated (periodic) subgroups, which are ${\scriptstyle \mathbf H}_{1}\FF$-groups.  Hence $G$ acts  on a tree with stabilisers conjugate to groups in the directed union. If every stabiliser was in $\UU$ so would be $G$, giving a contradiction. Hence, in particular, at least one of the stabilisers does not lie in $\BB$, hence it can't have a bound on the orders of its finite subgroups.
\end{rem}

  \begin{question}Are there countable ${\scriptstyle \mathbf H}_{1}\FF$-groups not contained in the class $\UU$?
  
The easiest example we know of, which lies in ${\scriptstyle \mathbf H}_{1}\FF$, yet it is unknown whether it belongs to $\UU$, is $SL_3(\mathbb{F}_p[t]).$ Hence it might be advisable to consider the class of $S$-arithmetic groups over global function fields. We start by fixing some notation. Let $S$ be a finite non-empty
set of pairwise inequivalent valuations on  a global function field $K$.  Let $\mathcal{O}_{S}\leq K$ be the
ring of $S$-integers and let $\mathbf{G}$ be a reductive K-group. Given a valuation $v$ of $K$, $K_{v}$ is the completion of $K$
with respect to $v$. If $L/K$ is a field extension, the $L$-$\rank$ of $\mathbf{G}$, $\rank_{L} \mathbf{G}$ is the dimension of a maximal $L$-split torus of $\mathbf{G}$. The $K$-group $\mathbf{G}$ is $L$-isotropic if $\rank_{L}\mathbf{G}\neq 0$. For  any connected non-commutative absolutely almost simple $K$-isotropic $K$-group  $\mathbf{G}$, the $S$-arithmetic subgroup $\mathbf{G}(\mathcal{O}_{S})$ acts on a building $X$  of dimension equal to $k(\mathbf{G}, S) := \sum_{v\in S}\rank_{K_{v}} \mathbf{G}$  \cite{BW}. 
It turns out that the building $X$ is a classifying space for proper actions for $\mathbf{G}(\mathcal{O}_{S})$, see the theorem below. From the homological properties of $\mathbf{G}(\mathcal{O}_{S})$  we are able to easily determine most of    its algebraic and geometric dimensions.

We introduce the notion of \emph{Kropholler dimension}, denoted $\mathfrak{K}(G)$:   given a group $G$, it is the minimal dimension of a contractible  $G$-$\CW$-complex with finite stabilisers. By definition,   $G\in{\scriptstyle \mathbf H}_{1}\mathfrak{F}$ if and only if $\mathfrak{K}(G)<\infty$. 
      \end{question}

\begin{thm}\label{kari}Let  $\bold{H}$ be a connected non-commutative absolutely almost simple $K$-isotropic $K$-group. Then  $\gd_{\FF}\bold{H}(\mathcal{O}_{S})=\mathfrak{K}(\bold{H}(\mathcal{O}_{S}))  =\F\cd \bold{H}(\mathcal{O}_{S})=\cd_{\mathbb{Q}}\bold{H}(\mathcal{O}_{S})=k(\bold{H}, S)$.
\begin{proof} Let $H$ be  $\prod_{v\in S}\bold{H}(K_{v})$, then there is  a $k(\bold{H}, S)$-dimensional Euclidean building $X$ associated to $H$. Since $\bold{H}(\mathcal{O}_{S})$ acts  with finite stabilisers on $X$, and this admits a $\cat$-metric, by \cite[Proposition 5]{bln} we obtain that $\gd_{\FF}\bold{H}(\mathcal{O}_{S}) \leq k(\bold{H}, S)$. By \cite{bgw} the group $\bold{H}(\mathcal{O}_{S})$ is of type $\FP_{ k(\bold{H}, S) -1}$. Note that $\bold{H}(\mathcal{O}_{S})$ has no bound on the orders of its finite subgroups as remarked in \cite{gg2}. If the rational cohomological dimension of $\bold{H}(\mathcal{O}_{S})$ was less than $k(\bold{H}, S)$, an application of \cite[Proposition 1]{bound} would give that $\bold{H}(\mathcal{O}_{S})$ has a bound on the orders of its finite subgroups, a contradiction.
Therefore we have $\cd_{\mathbb{Q}}\bold{H}(\mathcal{O}_{S})=k(\bold{H}, S)$ and the result follows.
\end{proof} 
\end{thm}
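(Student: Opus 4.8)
The plan is a sandwich argument. For an arbitrary group $G$ one has the chain
$$\cd_{\mathbb{Q}}G \;\leq\; \F\cd G \;\leq\; \mathfrak{K}(G) \;\leq\; \gd_{\FF}G,$$
so it suffices to show that for $G=\bold{H}(\mathcal{O}_{S})$ the two ends equal $k(\bold{H},S)$; equality then propagates through all four invariants. The rightmost inequality is immediate, since a finite-dimensional model for $\eg$ is in particular a finite-dimensional contractible $G$-$\CW$-complex with finite cell stabilisers. For $\F\cd G\leq\mathfrak{K}(G)$, I would observe that the augmented cellular chain complex of a $\mathfrak{K}(G)$-dimensional contractible $G$-$\CW$-complex with finite stabilisers is a length-$\mathfrak{K}(G)$ resolution of $\Z$ by permutation modules with finite stabilisers, and by Bouc \cite{bouc} and Kropholler--Wall \cite{kropwall} it splits on restriction to every finite subgroup, so the characterisation of $\F\cd$ recalled in the introduction gives $\F\cd G\leq\mathfrak{K}(G)$. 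For $\cd_{\mathbb{Q}}G\leq\F\cd G$, apply $-\otimes_{\Z}\mathbb{Q}$ to an admissible $\FF$-resolution of $\Z$ of length $\F\cd G$: each summand $\Z[G/K]$ with $K$ finite becomes $\mathbb{Q}[G/K]\cong\mathbb{Q}G\otimes_{\mathbb{Q}K}\mathbb{Q}$, which is $\mathbb{Q}G$-projective since $\mathbb{Q}K$ is semisimple, and exactness is preserved because $\mathbb{Q}$ is $\Z$-flat, so one obtains a $\mathbb{Q}G$-projective resolution of $\mathbb{Q}$ of the same length.

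For the upper bound I would set $H=\prod_{v\in S}\bold{H}(K_{v})$ and let $X$ be the associated Euclidean building, i.e.\ the product of the Bruhat--Tits buildings of the local factors, of dimension $\sum_{v\in S}\rank_{K_{v}}\bold{H}=k(\bold{H},S)$. It is complete and $\cat$, and $\bold{H}(\mathcal{O}_{S})$, sitting as a discrete subgroup of $H$, acts on it with finite stabilisers; by the Bruhat--Tits fixed point theorem the fixed-point set of any finite subgroup is a non-empty convex, hence contractible, subspace, so $X$ is a finite-dimensional model for $E_{\FF}\bold{H}(\mathcal{O}_{S})$ (equivalently, invoke \cite[Proposition 5]{bln}). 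Hence $\gd_{\FF}\bold{H}(\mathcal{O}_{S})\leq k(\bold{H},S)$.

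The lower bound $\cd_{\mathbb{Q}}\bold{H}(\mathcal{O}_{S})\geq k(\bold{H},S)$ is where the genuine content lies. I would quote the finiteness-property theorem of \cite{bgw}, that $\bold{H}(\mathcal{O}_{S})$ is of type $\FP_{k(\bold{H},S)-1}$, together with the fact \cite{gg2} that $\bold{H}(\mathcal{O}_{S})$ has no bound on the orders of its finite subgroups. Were $\cd_{\mathbb{Q}}\bold{H}(\mathcal{O}_{S})<k(\bold{H},S)$, then \cite[Proposition 1]{bound} --- a group of type $\FP_{n-1}$ with $\cd_{\mathbb{Q}}<n$ has a bound on the orders of its finite subgroups --- applied with $n=k(\bold{H},S)$ would contradict the unbounded torsion. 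So $\cd_{\mathbb{Q}}\bold{H}(\mathcal{O}_{S})\geq k(\bold{H},S)$, and combined with the displayed chain and the upper bound,
$$k(\bold{H},S)\leq\cd_{\mathbb{Q}}\bold{H}(\mathcal{O}_{S})\leq\F\cd\bold{H}(\mathcal{O}_{S})\leq\mathfrak{K}(\bold{H}(\mathcal{O}_{S}))\leq\gd_{\FF}\bold{H}(\mathcal{O}_{S})\leq k(\bold{H},S),$$
which forces all four invariants to equal $k(\bold{H},S)$.

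The cohomological bookkeeping in the first paragraph is formal and the upper bound is standard building theory; the main obstacle is supplying the two external ingredients for the lower bound --- the $\FP_{k(\bold{H},S)-1}$ finiteness property, which rests on the reduction theory of $S$-arithmetic groups over global function fields, and the unboundedness of the torsion in $\bold{H}(\mathcal{O}_{S})$ --- since without either of these the lower bound, and hence the whole chain of equalities, collapses.
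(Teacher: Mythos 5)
Your proposal is correct and follows essentially the same route as the paper: the upper bound via the Euclidean building and \cite[Proposition 5]{bln}, and the lower bound on $\cd_{\mathbb{Q}}$ via the $\FP_{k(\bold{H},S)-1}$ property from \cite{bgw}, the unbounded torsion from \cite{gg2}, and \cite[Proposition 1]{bound}. The only difference is that you spell out the chain $\cd_{\mathbb{Q}}\leq\F\cd\leq\mathfrak{K}\leq\gd_{\FF}$ explicitly, which the paper compresses into ``the result follows''.
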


 \section{Some consequences}
 
 \noindent Let us begin by recording that Proposition \ref{link} and Theorem \ref{2} imply that both Talelli's Conjecture \ref{Talelliconj} and the second author's Conjecture \ref{Nucconj} hold within $\UU^*_{\omega_0}.$  Furthermore, if we assume that there are functions $\rho_1,\rho_2: \mathbb{N} \to \mathbb{N}$ such that for every group of finite Bredon cohomological dimension we have $\cd_{\FF}G \leq \rho_1(\FF\cd G)$ and $\cd_{\FF}G \leq \rho_2(\silp   G)$ respectively, then arguments analogous to those in Theorem \ref{rhotheorem} imply that both conjectures hold in $\UU.$
    
    \begin{LM}\label{relativemaier} Let $T$ be a $G$-tree with edge set $E=\bigsqcup_{i\in I} L_{i}\backslash G $ and vertex set $V=\bigsqcup_{j\in J} N_{j}\backslash G$. 
Then $\FF\cd(G) \leq \sup\{\FF\cd N_{j} \, |\, j\in J\}+1$. 
\begin{proof}This is an immediate consequence of the Mayer-Vietoris sequence in $\FF$-cohomology associated to the short exact $\FF$-split sequence:
 $\Z E \toi \Z V\tos \Z$ \cite{nucinkis-00}.
\end{proof}
\end{LM}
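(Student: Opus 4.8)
The plan is to put the tree $T$ into a short exact sequence of coefficient modules and run it through the Mayer--Vietoris machinery for $\FF$-cohomology. First I would note that, since $T$ is a tree, its augmented cellular chain complex is a short exact sequence of $\Z G$-modules
$$\Z E \toi \Z V \tos \Z,$$
where $\Z E=\bigoplus_{i\in I}\Z[L_i\backslash G]$ and $\Z V=\bigoplus_{j\in J}\Z[N_j\backslash G]$ are permutation modules with finite stabilisers. The one substantive input is that this sequence is $\FF$-split: restricted to any finite subgroup $K\leq G$, the tree $T$ is a finite-dimensional contractible $K$-$\CW$-complex with finite stabilisers, so by the theorem of Bouc \cite{bouc} and Kropholler--Wall \cite{kropwall} the augmented chain complex splits over $\Z K$; since this holds for every finite $K$, the sequence is $\FF$-split.

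Next I would apply the long exact sequence in $\FF$-cohomology attached to an $\FF$-split short exact sequence of coefficient modules \cite{nucinkis-00}, combined with the Shapiro-type identification $\FF H^{\ast}(G,\Z[H\backslash G])\cong \FF H^{\ast}(H,\res_{H}\,-)$ in each coefficient variable --- the analogue for $\FF$-cohomology of the isomorphism $\Ext^{\ast}_{\FF,G}(\Z[-,H\backslash G],-)\cong \Ext^{\ast}_{\FF,H}(\underline{\Z},\res-)$ proved in Lemma \ref{MV}, with the same adjunction argument. This yields, for every coefficient module $M$, an exact sequence
$$\cdots\to \FF H^{n}(G,M)\to \bigoplus_{j\in J}\FF H^{n}(N_{j},\res_{N_{j}}M)\to \bigoplus_{i\in I}\FF H^{n}(L_{i},\res_{L_{i}}M)\to \FF H^{n+1}(G,M)\to\cdots$$

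Finally I would carry out the dimension count. Put $s=\sup\{\FF\cd N_{j}\mid j\in J\}$. In a graph of groups each edge group $L_{i}$ embeds in a vertex group, and $\FF\cd$ is monotone under passage to subgroups (restrict an $\FF$-split permutation resolution with finite stabilisers), so $\FF\cd L_{i}\leq s$ for all $i$. Hence for $n\geq s+1$ one has $\FF H^{n}(L_{i},\res_{L_{i}}M)=0$ for all $i$ because $n>\FF\cd L_{i}$, and $\FF H^{n+1}(N_{j},\res_{N_{j}}M)=0$ for all $j$ because $n+1>\FF\cd N_{j}$; exactness then forces $\FF H^{n+1}(G,M)=0$. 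Since $M$ is arbitrary, $\FF H^{m}(G,-)=0$ for all $m\geq s+2$, i.e. $\FF\cd G\leq s+1$, which is the claim. I expect the only mildly delicate points to be verifying the $\FF$-splitting of the tree sequence (handled by Bouc--Kropholler--Wall) and the bookkeeping that lets the edge-group contributions be absorbed into the vertex-group bound via subgroup monotonicity; everything else is formal.
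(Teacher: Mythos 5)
Your proof is correct and follows exactly the route the paper takes: the paper's own proof is a one-line appeal to the Mayer--Vietoris sequence in $\FF$-cohomology arising from the $\FF$-split short exact sequence $\Z E \rightarrowtail \Z V \twoheadrightarrow \Z$ of \cite{nucinkis-00}, and you have simply supplied the details (the Bouc/Kropholler--Wall splitting, the Shapiro identification, and the dimension count absorbing the edge groups via subgroup monotonicity of $\FF\cd$). Nothing further is needed.
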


\begin{cor}\label{relcor} Suppose Conjecture \ref{Nucconj} holds. Then there exists a function $\phi : \mathbb{N} \to \mathbb{N}$ such that  $\gd_{\FF}G\leq \phi(\FF\cd(G))$ for every group $G$ of finite $\FF$-cohomological dimension. 

\begin{proof} Assume by contradiction that  there is no such function. Then there exists an $n>1$ and a family of groups $\{ G\}_{i\in \mathbb{N}}$ such that $\FF\cd G_{i}\leq n$ for every  $i$ and $\lim_{i\to \infty}\gd_{\FF}G_{i} = \infty$ \\
The group $G=\bigast_{i\in \mathbb{N}}G_{i}$ 
  has $\FF\cd G\leq n+1$ but $\gd_{\FF}G=\infty$ a contradiction. The inequality $\FF\cd G\leq n+1$ follows from Lemma \ref{relativemaier}. Since $G$ contains subgroups of arbitrarily large Bredon geometric dimension we have  $\gd_{\FF}G=\infty$.
\end{proof}
\end{cor}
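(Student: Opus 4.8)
\textbf{Proof proposal for Corollary \ref{relcor}.}
The plan is to argue by contradiction, exploiting the fact that both $\FF$-cohomological dimension and Bredon geometric dimension behave nicely under the formation of free products, but in asymmetric ways: the former grows by at most one, whereas the latter is at least the supremum of the geometric dimensions of the free factors (indeed, subgroups cannot increase $\gd_\FF$). So suppose no such function $\phi$ exists. Then, using the hypothesis that Conjecture \ref{Nucconj} holds (so that $\FF\cd G<\infty$ already forces $\gd_\FF G<\infty$ for every group), the failure of $\phi$ means precisely that $\gd_\FF$ is unbounded along some family of groups of uniformly bounded $\FF$-cohomological dimension. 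First I would fix an integer $n>1$ and a sequence $\{G_i\}_{i\in\mathbb N}$ with $\FF\cd G_i\le n$ for all $i$ and $\gd_\FF G_i\to\infty$ as $i\to\infty$.

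Next I would assemble the single group $G=\bigast_{i\in\mathbb N}G_i$, the free product of the whole family. This group is the fundamental group of a graph of groups (a ``star'' with trivial edge groups and vertex groups the $G_i$), so Lemma \ref{relativemaier} applies directly with all edge groups trivial: it yields $\FF\cd G\le \sup_i\FF\cd G_i+1\le n+1$. In particular $\FF\cd G<\infty$, and so by Conjecture \ref{Nucconj} (which we are assuming) $G$ admits a finite-dimensional model for $\eg$, i.e. $\gd_\FF G<\infty$. On the other hand, each $G_i$ embeds as a subgroup (a free factor) of $G$, and Bredon geometric dimension is monotone under passage to subgroups; hence $\gd_\FF G\ge \gd_\FF G_i$ for every $i$, forcing $\gd_\FF G\ge\sup_i\gd_\FF G_i=\infty$. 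This contradiction with $\gd_\FF G<\infty$ completes the argument.

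The only genuinely delicate point is the monotonicity statement $\gd_\FF H\le\gd_\FF G$ for a subgroup $H\le G$, which is needed to conclude $\gd_\FF G=\infty$; this is standard (a model for $\eg$ restricts to a model for $E_\FF H$, and its dimension cannot increase), but it is the load-bearing step, together with the bookkeeping needed to see that $\FF\cd G_i\le n$ with $\gd_\FF G_i$ unbounded is exactly the negation of ``$\phi$ exists'' — here one uses that $\FF\cd$ takes values in $\mathbb N$, so unboundedness of $\gd_\FF$ over all groups of finite $\FF$-cohomological dimension localizes to unboundedness over some fixed fibre $\FF\cd^{-1}(\{0,\dots,n\})$. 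Beyond that the proof is a routine application of Lemma \ref{relativemaier}.
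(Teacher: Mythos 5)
Your proposal is correct and follows essentially the same route as the paper: negate the existence of $\phi$ to get a family $\{G_i\}$ with $\FF\cd G_i\le n$ and $\gd_{\FF}G_i\to\infty$, form the free product, bound its $\FF$-cohomological dimension by $n+1$ via Lemma \ref{relativemaier}, and derive a contradiction with Conjecture \ref{Nucconj} using monotonicity of $\gd_{\FF}$ under subgroups. The extra detail you supply (subgroup monotonicity and the localisation of unboundedness to a fixed fibre of $\FF\cd$) is exactly what the paper leaves implicit.
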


\begin{rem} We can also make a statement in the same manner as in Corollary \ref{relcor}, involving $\silp G= \spli G$ or $\pd_{\Z G}(B(G,\Z))$ and Conjecture \ref{Talelliconj}. The proof is analogous to the above. In particular: Let $\lambda(G)$ denote either $\silp G =\spli G$ or $\pd_{\Z G}(B(G,\Z))$. If Conjecture \ref{Talelliconj} holds, then there is a function $\psi: \mathbb{N} \to \mathbb{N}$ such that $\gd_{\FF}G\leq \psi(\lambda(G))$ for every group $G$ with $\lambda(G) < \infty.$
\end{rem}

\begin{thm}\label{4} Let $T$ be a $G$-tree with edge set $E=\bigsqcup_{i\in I} L_{i}\backslash G $ and vertex set $V=\bigsqcup_{j\in J} N_{j}\backslash G$. 
Then $\mathfrak{K}(G) \leq \sup\{\mathfrak{K}(N_{j})_{j\in J}\}+1$. In particular $G\in {\scriptstyle \mathbf H}_{1}\FF$ if and only if there is a bound on the Kropholler dimensions of the edge and vertex groups.
\begin{proof}Replace the edge and vertex groups with suitable  ${\scriptstyle \mathbf H}_{1}\FF$-spaces of minimal dimension and proceed as in   \cite{luck-05} to obtain an ${\scriptstyle \mathbf H}_{1}\FF$-space for $G$ of dimension equal to $ \sup\{\mathfrak{K}(N_{j})_{j\in J}\}+1$.
\end{proof}
\end{thm}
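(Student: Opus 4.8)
Let $T$ be a $G$-tree with edge set $E=\bigsqcup_{i\in I} L_{i}\backslash G$ and vertex set $V=\bigsqcup_{j\in J} N_{j}\backslash G$; we must show $\mathfrak{K}(G) \leq \sup\{\mathfrak{K}(N_{j}) \mid j\in J\}+1$, and deduce that $G\in {\scriptstyle \mathbf H}_{1}\FF$ if and only if the Kropholler dimensions of the edge and vertex groups are uniformly bounded.

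\emph{Plan.} The plan is to mimic the standard ``gluing'' construction that produces a model for $\eg$ from models for the vertex and edge groups, but now at the cheaper level of a contractible $G$-$\CW$-complex with finite stabilisers (an ${\scriptstyle \mathbf H}_{1}\FF$-space), where one does \emph{not} need fixed-point contractibility, only global contractibility. Set $n := \sup\{\mathfrak{K}(N_{j}) \mid j\in J\}$; we may assume $n<\infty$, else there is nothing to prove. For each vertex orbit representative $N_j$ choose a contractible $N_j$-$\CW$-complex $Y_j$ with finite stabilisers and $\dim Y_j = \mathfrak{K}(N_j) \le n$; similarly for each edge orbit representative $L_i$ choose such an $L_i$-$\CW$-complex $Z_i$ with $\dim Z_i \le n$ (this is legitimate because $\mathfrak{K}(L_i) \le \mathfrak{K}(N_j)$ whenever $L_i$ embeds as a subgroup of a vertex group $N_j$ along an incident edge — a subgroup inherits the ambient complex with the restricted action, which stays contractible and finite-stabiliser). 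Now build the $G$-$\CW$-complex
\[
X \;=\; \Bigl( \bigsqcup_{i\in I} Z_i \times [0,1] \times_{L_i} G \Bigr) \; \bigcup \; \Bigl( \bigsqcup_{j\in J} Y_j \times_{N_j} G \Bigr),
\]
where the two ends $Z_i \times \{0\}$ and $Z_i \times \{1\}$ of each edge-cylinder are glued, via the $L_i$-maps $Z_i \to Y_{j}$ coming from the inclusions of $L_i$ into the two incident vertex groups, onto the corresponding copies of the $Y_j$'s. This is exactly the homotopy-colimit / tree-of-spaces construction used for $\eg$ in L\"uck's survey \cite{luck-05}; carrying it out here one obtains a $G$-$\CW$-complex $X$ with finite stabilisers and $\dim X \le n+1$.

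\emph{Contractibility.} The key point is that $X$ is contractible. Here I would argue as in the classical Bass--Serre / tree-of-spaces argument: $X$ maps $G$-equivariantly onto $T$, the preimage of a vertex is a copy of some $Y_j$ (contractible), the preimage of an (open) edge deformation-retracts onto a copy of some $Z_i$ (contractible), and the gluing maps are cofibrations; so a Mayer--Vietoris / nerve argument shows $X$ is homotopy equivalent to the tree $T$, which is contractible. Alternatively, at chain level: the cellular chain complex of $X$ sits in a short exact sequence with the chains of the edge-part and the vertex-part, yielding a long exact sequence that, combined with the contractibility of each $Y_j$ and $Z_i$ and the acyclicity of $T$, forces $\widetilde{H}_*(X)=0$; since $X$ is simply connected (van Kampen, as $T$ is simply connected and all pieces are), it is contractible. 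Hence $\mathfrak{K}(G) \le \dim X \le n+1$, which is the asserted inequality.

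\emph{The ${\scriptstyle \mathbf H}_{1}\FF$ characterisation.} For the ``in particular'' clause: if there is a bound $n$ on the Kropholler dimensions of all edge and vertex groups then by the inequality $\mathfrak{K}(G) \le n+1 < \infty$, so $G\in {\scriptstyle \mathbf H}_{1}\FF$. Conversely, if $G\in {\scriptstyle \mathbf H}_{1}\FF$ then $\mathfrak{K}(G)<\infty$, and since every vertex and edge group is a subgroup of $G$ — and $\mathfrak{K}$ is monotone under passage to subgroups (restrict the ambient contractible finite-stabiliser complex) — each $\mathfrak{K}(N_j), \mathfrak{K}(L_i) \le \mathfrak{K}(G)$, giving the uniform bound.

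\emph{Main obstacle.} The only genuinely delicate point is verifying contractibility of the glued complex $X$ cleanly, i.e. checking that the gluing maps can be taken to be $G$-$\CW$ inclusions so that Mayer--Vietoris/van Kampen apply, and that the dimension does not accidentally jump above $n+1$ (the $+1$ coming precisely from the cylinder coordinate on the edges). Everything else is a routine adaptation of the well-known construction for classifying spaces for proper actions; the point of the lemma is simply that the construction survives when ``contractible fixed-point sets'' is weakened to ``contractible total space'', which it plainly does.
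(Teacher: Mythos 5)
Your construction is exactly the one the paper invokes (the tree-of-spaces / double mapping cylinder from L\"uck's survey), and most of it is sound: the dimension count, the Mayer--Vietoris/van Kampen argument identifying the glued complex with the tree $T$ up to homotopy, and the subgroup-monotonicity of $\mathfrak{K}$ that you use both to bound the edge spaces and to get the converse implication in the ``in particular'' clause. The one genuine gap is precisely the step you flag and then wave away: the existence of the $L_i$-equivariant gluing maps $Z_i \to \res_{I_{L_i}} Y_j$ ``coming from the inclusions''. When the $Y_j$ are models for $E_{\FF}N_j$, these maps exist (uniquely up to equivariant homotopy) by the universal property of classifying spaces for proper actions; an ${\scriptstyle \mathbf H}_{1}\FF$-space has no such universal property. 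To build an $L_i$-map cell by cell you must send each cell of $Z_i$ with stabiliser $F$ into $Y_j^F$ and then extend over its boundary, which needs $Y_j^F$ to be nonempty and sufficiently connected. Nothing in the definition of a finite-dimensional contractible complex with finite stabilisers guarantees this --- fixed-point sets of finite subgroups can even be empty (Floyd--Richardson: $A_5$ acts simplicially on a disk without fixed points) --- so ``the maps coming from the inclusions'' is an assertion, not a construction. The paper's one-sentence proof hides the same issue behind the word ``suitable'', so you are in good company, but a complete write-up must address it.

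The qualitative statement is easily salvaged: for an edge with endpoints in orbits $j_0,j_1$, take the edge space to be $\res_{I_{L_i}}(Y_{j_0}\times Y_{j_1})$ with the two projections as gluing maps. This is contractible with finite stabilisers and the rest of your argument goes through verbatim, but its dimension can be as large as $2n$, yielding only $\mathfrak{K}(G)\leq 2n+1$ rather than $n+1$. That weaker, still uniform, bound is enough for the ``in particular'' equivalence and for the corollary the paper derives from this theorem (the existence of the function $\gamma$). To obtain the sharp bound $n+1$ one needs $n$-dimensional edge spaces admitting equivariant maps to both endpoint spaces, and neither your proposal nor the paper supplies an argument for that.
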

\begin{cor}If Conjecture \ref{K-M} holds, then there exists a function $\gamma : \mathbb{N} \to \mathbb{N}$ such that $\gd_{\FF}G\leq \gamma(\mathfrak{K}(G))$ for every ${\scriptstyle \mathbf H}_{1}\FF$-group $G$. 
\end{cor}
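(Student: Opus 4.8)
The plan is to argue by contradiction, in exact parallel with Corollary \ref{relcor}. Suppose no function $\gamma$ as in the statement exists. Then there is a fixed integer $n$ and a sequence of ${\scriptstyle \mathbf H}_{1}\FF$-groups $\{G_i\}_{i\in\mathbb{N}}$ with $\mathfrak{K}(G_i)\leq n$ for all $i$, yet $\lim_{i\to\infty}\gd_{\FF}G_i=\infty$; here we are using that $\mathfrak K(G_i)<\infty$ is equivalent to $G_i\in{\scriptstyle \mathbf H}_{1}\FF$ by definition.

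Next I would form the free product $G=\bigast_{i\in\mathbb{N}}G_i$, viewed as the fundamental group of a $G$-tree with vertex groups the $G_i$ and trivial edge groups. Theorem \ref{4} then gives $\mathfrak{K}(G)\leq\sup_i\mathfrak{K}(G_i)+1\leq n+1<\infty$, so that $G\in{\scriptstyle \mathbf H}_{1}\FF$. Granting Conjecture \ref{K-M}, $G$ admits a finite-dimensional model for $\eg$, hence $\gd_{\FF}G<\infty$.

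Finally I would invoke monotonicity of Bredon geometric dimension under subgroups: each $G_i$ embeds in $G$, and restricting a model for $\eg$ along this inclusion is a model for $E_{\FF}G_i$, so $\gd_{\FF}G_i\leq\gd_{\FF}G$ for every $i$. Combined with $\lim_i\gd_{\FF}G_i=\infty$ this forces $\gd_{\FF}G=\infty$, contradicting the previous paragraph. Hence $\gamma$ exists.

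None of the steps presents a genuine obstacle; the only point that needs care is recording that $\mathfrak K$ is controlled under free products (which is exactly the content of Theorem \ref{4}) and that $\gd_{\FF}$ does not increase when passing to subgroups, both of which are standard. The argument is purely formal once Conjecture \ref{K-M} is assumed, just as in Corollary \ref{relcor} and the remark following it.
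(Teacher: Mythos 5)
Your argument is correct and is exactly the proof the paper intends: the paper's own proof simply says the corollary ``follows from Theorem \ref{4} and can be proved analogously to Corollary \ref{relcor},'' which is precisely the contradiction via the free product $G=\bigast_i G_i$, the bound $\mathfrak{K}(G)\leq n+1$ from Theorem \ref{4}, and the monotonicity of $\gd_{\FF}$ under subgroups that you spell out.
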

\begin{proof}  This follows from Theorem \ref{4}, and  can be proved analogously to Corollary \ref{relcor}.
\end{proof}

\begin{question}  Does there exist a function $\gamma : \mathbb{N} \to \mathbb{N}$ such that $\gd_{\FF}G\leq \gamma(\mathfrak{K}(G))?$ for all  groups $G$ admitting a finite-dimensional model for $\eg$.

Can we replace $\mathfrak{K}(G)$ above  by $\FF\cd G, \spli G=\silp G$ or $\pd_{\Z G}B(G,\Z)?$

The only known examples where this function is not the identity are those of \cite{leary-03}, mentioned above.

\end{question}

\end{document}